\newcommand{\ignore}[1]{}
\newcommand{\hide}[1]{}
\DeclareMathOperator{\ad}{ad}
\newcommand{\F}{\mathbb F}
\newcommand{\N}{\mathbb N}
\newtheorem{dummy}{Dummy}
\newtheorem{lemma}[dummy]{Lemma}
\newtheorem{theorem}[dummy]{Theorem}
\newtheorem{prop}[dummy]{Proposition}
\newtheorem{cor}[dummy]{Corollary}
\theoremstyle{definition}
\newtheorem{definition}[dummy]{Definition}
\newtheorem{convention}[dummy]{Convention}
\theoremstyle{remark}
\newtheorem{rem}[dummy]{Remark}
\newtheorem*{rem*}{Remark to ourselves}
\begin{document}%_______________________________________

\bibliographystyle{amsalpha}

\author{Marina Avitabile}
\email{marina.avitabile@unimib.it}
\address{Dipartimento di Matematica e Applicazioni\\
  Universit\`a degli Studi di Milano - Bicocca\\
 via Cozzi 55\\
  I-20125 Milano\\
  Italy}
\author{Sandro Mattarei}
\email{smattarei@lincoln.ac.uk}
\address{Charlotte Scott Centre for Algebra\\
University of Lincoln \\
Brayford Pool
Lincoln, LN6 7TS\\
United Kingdom}

\title[Diamond distances in Nottingham algebras]{Diamond distances in Nottingham algebras}

\subjclass[2020]{Primary 17B50; secondary 17B70, 17B65}
%    The 2010 edition of the Mathematics Subject Classification is
%    now available.  If you are citing a classification from the
%    new scheme, use the following input coding instead.
\keywords{Modular Lie algebra, graded Lie algebra, thin Lie algebra}

\begin{abstract}
{\em Nottingham algebras} are a class of just-infinite-dimensional, modular,
$\N$-graded Lie algebras,
which includes the graded Lie algebra associated to the Nottingham group with respect to its lower central series.
Homogeneous components of a Nottingham algebra have dimension one or two, and in the latter case they are called {\em diamonds}.
The first diamond occurs in degree $1$, and the second occurs in degree $q$, a power of the characteristic.
Many examples of Nottingham algebras are known,
in which each diamond past the first can be assigned a {\em type,}
either belonging to the underlying field or equal to $\infty$.

A prospective classification of Nottingham algebras requires describing all possible diamond patterns.
In this paper we establish some crucial contributions towards that goal.
One is showing that all diamonds, past the first,
of an {\em arbitrary} Nottingham algebra $L$
can be assigned  a type,
in such a way that the degrees
and types of the diamonds completely describe $L$.
At the same time we prove that
the difference in degrees of any two consecutive diamonds in any Nottingham algebra equals $q-1$.
As a side-product of our investigation, we classify the Nottingham algebras where all diamonds have type $\infty$.
\end{abstract}

\maketitle
\section{Introduction}\label{sec:intro}

A {\em thin} Lie algebra is a graded Lie algebra $L=\bigoplus_{i=1}^{\infty}L_i$
with $\dim L_1=2$ and satisfying the following {\em covering property:}
for each $i$, each nonzero $z\in L_i$ one has $[zL_1]=L_{i+1}$.
(Note that we write Lie products without a comma.)
This implies at once that homogeneous components of a thin Lie algebra are at most two-dimensional.
Those components of dimension two are called {\em diamonds,}
hence $L_1$ is a diamond, and if there are no other diamonds then $L$ is a {\em graded Lie algebra of maximal class}~\cite{CMN,CN}.
It is convenient, however, to explicitly exclude graded Lie algebras of maximal class from the definition of thin Lie algebras.
Thus, a thin Lie algebra must have at least one further diamond besides $L_1$ (which we may call the {\em first} diamond of $L$),
and we let $L_k$ be the earliest such diamond (the {\em second} diamond).
In this paper we conveniently assume all thin Lie algebras to have infinite dimension.
We briefly comment on the finite-dimensional case in Remark~\ref{rem:finite-dim}.

The term {\em diamond} originates from a lattice-theoretic characterization of thin Lie algebras motivated by~\cite{Br}.
In fact, any nonzero graded ideal $I$ of a thin Lie algebra $L$ is constrained
between two consecutive Lie powers of $L$, in the sense that $L^i\supseteq I\supseteq L^{i+1}$ for some $i$,
and hence the lattice of graded ideals looks like a sequence of diamonds
(a name for the lattice of subspaces of a two-dimensional vector space) connected by {\em chains}.
We will not formally assign numerical lengths to those chains but it should be clear that they are important in describing the structure of a thin Lie algebra.
Knowing those lengths amounts to knowing the degrees in which the diamonds occur.

The most basic invariant of a thin Lie algebra is the degree $k$ of the second diamond.
It is very easy to see that $k$ must be odd.
It is known from~\cite{CMNS} and~\cite{AviJur}
(but see also~\cite{Mat:chain_lengths} for a revised treatment of a portion of the argument)
that if the characteristic $p$ is different from $2$ then $k$ can only be one of $3$, $5$, $q$, or $2q-1$, where $q$ is a power of $p$.
In particular, only $3$ and $5$ can occur in characteristic zero.
In fact, in characteristic zero or larger than $5$, thin Lie algebras with $k=3$ or $5$,
with the further assumption $\dim(L_4)=1$ in the former case,
were shown in~\cite{CMNS} to belong to up to three isomorphism types,
associated to $p$-adic Lie groups of types $A_1$ and $A_2$. (Explicit realizations can be found in~\cite{Mat:thin-groups}.)
In contrast, the values $q$ and $2q-1$ for $k$ occur for two wide classes of thin Lie algebras built from certain nonclassical finite-dimensional
simple modular Lie algebras, and also for thin Lie algebras obtained from graded Lie algebras of maximal class through various constructions.

In this paper we focus on thin Lie algebras with second diamond $L_q$.
One remarkable example of such thin Lie algebras arises as the graded Lie algebra associated to the lower central series
of the {\em Nottingham group} over the prime field $\F_p$, for $p$ odd~\cite{Car:Nottingham}.
That algebra has its second diamond in degree $p$, but admits a natural generalization with a power $q$ of $p$ in place of $p$.
For this reason thin Lie algebras with second diamond $L_q$ have been called {\em Nottingham algebras} in the literature.
However, because of exceptional behaviour in small characteristics here we reserve the name {\em Nottingham algebras} to thin Lie algebras
of characteristic $p>3$, having second diamond $L_q$ with $q>5$.

A wide variety of Nottingham algebras are known.
Several arise from certain cyclic gradings of various simple Lie algebras of Cartan type.
In particular, the thin Lie algebra associated with the Nottingham group arises from a cyclic grading of the {\em Witt algebra}.
Further Nottingham algebras, and in fact uncountably many ones, are closely related to graded Lie algebras of maximal class.
We refer the reader to Theorem~\ref{thm:k=q} and the discussion which follows it for a comprehensive survey.

In all Nottingham algebras which have been studied so far,
each diamond past the first can be attached a {\em type},
which is an element of the underlying field or possibly $\infty$.
% and describes certain relations hold.
The second diamond $L_q$ has invariably type $-1$, and we assign no type to the first diamond $L_1$.
The type of a diamond $L_m$ describes the adjoint action of $L_1$ on $L_m$, in such a way that knowledge of
all degrees in which diamonds occur in $L$, and their types, determines $L$ up to isomorphism.
It is necessary to include {\em fake diamonds} in such a description.
Those are in fact one-dimensional components, as we explain in Section~\ref{sec:types}, and correspond to types $0$ and $1$.
We refer to diamonds which are not fake, and thus are two-dimensional, as {\em genuine} diamonds.

It is by no means obvious that any two-dimensional component
of an arbitrary Nottingham algebra can be assigned a type.
Proving that this is indeed the case constitutes
a substantial step in an ambitious
classification project of Nottingham algebras.
Another step is proving that any two consecutive diamonds
occur at a degree difference of $q-1$.
We achieve both of those goals in this paper.
The statement on the degree difference needs proper interpretation in case of fake diamonds, as we explain below.

\begin{theorem}\label{thm:distance_intro}
Let $L$ be a Nottingham algebra, with second diamond $L_q$,
and let $y$ be a nonzero element of $L_1$ which centralizes $L_2$.
The following assertions hold.
\begin{itemize}
\item[(a)]
No more than two consecutive homogeneous components
of $L$ can be not centralized by $y$.
\item[(b)]
If $[L_{m-1}y]\neq 0$
and $[L_{m}y]\neq 0$,
then $L_m$ is two-dimensional (a genuine diamond) and can be
assigned a type.
\item[(c)]
If $[L_{m-2}y]=0$,
$[L_{m-1}y]\neq 0$
and $[L_{m}y]=0$,
then $L_{m}$ is a fake diamond of type $0$.
Equivalently, $L_{m-1}$ may be viewed as a fake diamond of type $1$.
\item[(d)]
The difference in degree of any two consecutive
(possibly fake) diamonds equals $q-1$,
for an appropriate interpretation of the fake diamonds if present.
\end{itemize}
\end{theorem}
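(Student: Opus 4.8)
\emph{Setting up.} I would fix a basis $x,y$ of $L_1$ with $[L_2y]=0$, which is possible since $\ad y$ acts on the one-dimensional $L_2$ and must have a kernel. Writing $w=[xy]$, so that $L_2=\la w\ra$, the hypothesis becomes $[wy]=0$. Two instances of the Jacobi identity then hold for every $v\in L$, namely
\[
[[vx]y]=[[vy]x]+[vw],\qquad [[vw]y]=[[vy]w],
\]
the second using $[wy]=0$. Away from diamonds each $L_i$ is one-dimensional, so the covering property forces $[L_ix]=L_{i+1}$ and $[L_iy]=0$; thus the phrase ``$L_m$ is not centralized by $y$'' means exactly that $\ad y$ does not vanish on $L_m$, and diamonds are detected by the revival of $\ad y$. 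The whole proof amounts to locating precisely the degrees at which $\ad y$ is nonzero.

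\emph{The local window, parts (a)--(c).} I would prove (a) by assuming $[L_{m-1}y]$, $[L_my]$ and $[L_{m+1}y]$ are all nonzero and deriving a contradiction: choosing $u\in L_{m-1}$ with $[uy]\neq0$, the first identity transports $\ad y$ across $\ad x$ at the cost of an $\ad w$-term, and the relation $[wy]=0$ (equivalently the commuting of $\ad w$ and $\ad y$ expressed by the second identity) then forces a third application of $\ad y$ to produce a second independent direction inside a one-dimensional component, violating thinness. Granting (a), the degrees carrying $\ad y$ fall into windows of length one or two. For a length-two window, say $\{m-1,m\}$ with the neighbours centralized, the first identity shows $[L_{m-1}x]$ and $[L_{m-1}y]$ to be independent, so they span a two-dimensional $L_m$; this is (b), and the genuine diamond $L_m$ acquires a type from the single scalar comparing the two ways the window feeds the ensuing chain, the two identities guaranteeing that this scalar is independent of the choices made and hence a well-defined type in the sense of Section~\ref{sec:types}. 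A length-one window is the configuration of (c): $L_m$ stays one-dimensional with $[L_my]=0$, which is precisely a fake diamond of type $0$, while reading the same window from the other side presents $L_{m-1}$ as a fake diamond of type $1$.

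\emph{The distance (d), and the main obstacle.} Starting from a (possibly fake) diamond $L_m$, let $c\in L_m$ be the direction continuing the chain and set $c_j=[c\,x^{\,j}]$, where $x^{\,j}$ abbreviates $j$-fold bracketing by $x$; these generate the one-dimensional components while the chain persists. Unrolling the first identity gives
\[
[c_iy]=[[c_0y]\,x^{\,i}]+\sum_{j=0}^{i-1}[[c_jw]\,x^{\,i-1-j}],
\]
in which $[c_0y]$ is fixed by the type of $L_m$ and every term that would bracket $\ad y$ against $w$ has already been removed by $[wy]=0$. Using the second identity together with $[ww]=0$ to collect all occurrences of $\ad w$ reduces this to a scalar linear recursion for the coefficient of $c_{i+1}$ in $[c_iy]$; its closed form is a single binomial coefficient, and the characteristic-$p$ identity $\binom{q-1}{i}\equiv(-1)^i\pmod p$ --- valid precisely because $q$ is a power of $p$ --- shows that the coefficient vanishes for $0\le i<q-2$ and is nonzero at $i=q-2$. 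Hence $\ad y$ first revives at $L_{m+q-2}$, so by (a)--(c) the next diamond occupies $L_{m+q-1}$, at distance $q-1$; the base case is the passage from $L_1$ (with $c=w$) to the second diamond $L_q$. The main obstacle is this middle reduction: $\ad x$ and $\ad w$ do not commute, their commutator $\ad[wx]$ being again of chain type, so extracting the clean binomial from the nested sum calls for an induction on the degree in which the type of the current diamond supplies the initial data of the recursion and is shown to determine the type, and the exact position, of the next diamond. That propagation of types is what simultaneously secures the type assignment asserted in (b) and the uniform distance $q-1$ claimed in (d).
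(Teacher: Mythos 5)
Your outline identifies the right skeleton (windows where $\ad y$ revives, of length one or two, separated by long centralized stretches), but the three load-bearing ingredients of the actual proof are missing or replaced by claims that do not hold as stated. First, your argument for (a) never invokes $(\ad y)^2=0$; this sandwich property of $y$ (which holds because the second diamond occurs past $L_5$, and is quoted from the literature) is what makes the proof of (a) work: from $[L_{m-2}y]=0$ one gets $\dim L_{m-1}=1$, then $[L_{m-1}yy]=0$ settles the one-dimensional case, and $0=[w[xyy]]=-2[wyxy]$ settles the two-dimensional one. Your ``third application of $\ad y$ produces a second independent direction'' does not follow from the two Jacobi identities you wrote down. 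Second, for (b) the genuine difficulty is not the independence of $[L_{m-1}x]$ and $[L_{m-1}y]$ but the relation $[wxx]=0$ (with $w$ spanning $L_{m-1}$), which is what ``having a type'' means; the paper obtains it from $(\ad x)^q=0$ (itself a consequence of the normalization $[v_1xx]=0$ of the standard generator $x$, which you never perform) together with the knowledge that $y$ centralizes the $q-3$ components preceding the diamond, i.e.\ it is only available inside a global induction.

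Third, and most seriously, the step you yourself flag as ``the main obstacle'' in (d) is precisely the content of Section~\ref{sec:chain_proof}, and your proposed resolution is not correct: the coefficients produced by transporting the relations $[yx^jy]=0$ past a diamond are not the single binomial $\binom{q-1}{i}$ but depend on the type $\mu$ of that diamond, and the resulting nonvanishing condition (Lemma~\ref{lemma:binomial}) genuinely fails at exceptional degrees --- when $q-1-h$ is a power of $p$, when $\mu=1$ and $q-h$ is a power of $p$, and when $\mu=-1$ at $h=q-4$. This forces separate ad hoc computations for $\mu=2$ (where one can only show the offending element is central and then invoke the covering property), and for $\mu=-1$ and $\mu=0$ one must import information about the \emph{previous} diamond, which is why Theorem~\ref{thm:chain_recap} carries extra hypotheses and why the whole argument is organized as an induction along the algebra (Proposition~\ref{prop:centr_by_y}, Corollary~\ref{cor:next_diamond}, Theorem~\ref{thm:distance}) rather than a local recursion. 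As written, your recursion also feeds the uncontrolled terms $[c_jw]$ (of degree $m+j+2$) back into the very components whose structure is being determined, so it is circular without the induction you allude to but do not set up; nor do you supply the base of that induction, namely the quoted facts that $y$ centralizes $L_2,\dots,L_{q-2}$ and $L_{q+1},\dots,L_{2q-3}$.
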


Some explanations are in order on the statement
of Theorem~\ref{thm:distance_intro}, which we briefly sketch here,
referring to Sections~\ref{sec:types} and~\ref{sec:chains} for details.
According to Theorem~\ref{thm:distance_intro}, if all diamonds of $L$
are genuine, that is, two-dimensional, they must occur in all degrees
congruent to $1$ modulo $q-1$.
However, in the presence of fake diamonds
it is not possible to draw that conclusion,
which is indeed untrue in general.
In fact, whenever a one-dimensional
component $L_m$ of a Nottingham algebra
is not centralized by $y$, we either call $L_m$ a diamond of type $1$,
or $L_{m+1}$ a diamond of type $0$.
That is because both situations fit in as special cases of a generic definition of diamond type, see Definitions~\ref{def:type}
and~\ref{def:type-fake}.
We never simultaneously
call both $L_m$ and $L_{m+1}$ (fake) diamonds,
but make a choice of precisely
one of them, see Convention~\ref{convention:fake}.
There is generally no intrinsic reason for this choice,
but we take advantage of this ambiguity when calculating the degree difference between consecutive diamonds in case at least
one of them is fake
(and the choice might be different for a given fake diamond
when computing its distance from the previous diamond,
or the next one).
We prove Theorem~\ref{thm:distance_intro} in Section~\ref{sec:chains},
as immediate consequence of a more precise result,
Theorem~\ref{thm:distance}.
Both depend on intermediate, more technical results, proved in
Sections~\ref{sec:chains} and~\ref{sec:chain_proof}.

Various patterns of diamond types occur in Nottingham algebras.
One possible pattern has all diamonds of infinite type, with the necessary exception of the first two.
Such algebras, which were constructed in~\cite{Young:thesis},
have second diamond $L_q$, of type $-1$,
and diamonds of infinite type in each higher degree
congruent to $1$ modulo $q-1$.
Now the locations of those diamonds are a consequence of
Theorem~\ref{thm:distance_intro}, and we obtain at once
the following simple instance of a classification result.

\begin{theorem}\label{thm:all_infty}
There exists a unique Nottingham algebra
with second diamond in degree $q$ and all other diamonds having infinite type.
\end{theorem}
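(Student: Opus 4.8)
The plan is to separate the two halves of the statement, existence and uniqueness, and to reduce each to results already in hand. Existence is immediate from the construction of~\cite{Young:thesis}, recalled in the paragraph preceding the statement: it produces a Nottingham algebra whose second diamond $L_q$ has type $-1$ and all of whose later diamonds have infinite type, so such an algebra certainly exists. Alternatively one could read existence off the recursive reconstruction of $L$ from its diamond data, but invoking~\cite{Young:thesis} is cleaner.

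For uniqueness I would let $L$ be an arbitrary Nottingham algebra satisfying the hypotheses and first pin down the degrees in which its diamonds occur. The key remark is that infinite type is a genuine diamond type, so that every diamond of $L$ past the first is two-dimensional and $L$ has no fake diamonds whatsoever. The fake-diamond ambiguity in the interpretation of Theorem~\ref{thm:distance_intro}(d) is therefore vacuous, and consecutive diamonds genuinely differ in degree by $q-1$. Starting from $L_1$ and the second diamond $L_q$, whose degrees already differ by $q-1$, a one-line induction then locates the diamonds exactly in the degrees $1+n(q-1)$ with $n\ge 0$, every intervening component being one-dimensional. Hence the complete list of diamond degrees, together with the associated types ($-1$ in degree $q$ and $\infty$ in each higher diamond degree), is forced by the hypotheses and involves no free parameter.

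It then remains to apply the structural fact, which is among the main results of the paper, that the degrees and types of the diamonds of a Nottingham algebra determine it up to isomorphism. Since by the previous paragraph any two algebras meeting the hypotheses share the same diamond degrees and types, they are isomorphic, and uniqueness follows. I do not expect a serious obstacle: all the delicate work---attaching a type to every diamond of an arbitrary Nottingham algebra, fixing the consecutive-diamond distance, and reconstructing $L$ from this data---has been done in the general theorems, and the present statement is essentially the act of assembling those pieces in the degenerate case where no fake diamond intervenes. The only point requiring any care is confirming that this degeneracy really holds, which is exactly the observation that $\infty$ is a genuine type.
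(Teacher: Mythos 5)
Your proposal is correct and matches the paper's own (very brief) argument: existence is quoted from Young's construction in~\cite{Young:thesis}, the absence of fake diamonds makes Theorem~\ref{thm:distance_intro} place the diamonds exactly in the degrees congruent to $1$ modulo $q-1$, and uniqueness then follows from the fact, established via Theorem~\ref{thm:distance_intro}, that the degrees and types of the diamonds determine a Nottingham algebra up to isomorphism. Your explicit remark that infinite type is a genuine type, so no fake-diamond ambiguity arises, is exactly the point the paper relies on implicitly.
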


%\newpage
\section{Nottingham algebras}\label{sec:types}

Recall from the Introduction that a {\em thin} Lie algebra is a graded Lie algebra $L=\bigoplus_{i=1}^{\infty}L_i$,
with $\dim L_1=2$ and satisfying the {\em covering property:}
for each $i$, each nonzero $z\in L_i$ satisfies $[zL_1]=L_{i+1}$.
However, in this paper we are assume $L$ to have infinite
dimension (but see Remark~\ref{rem:finite-dim}),
and it follows that $L$ has trivial centre.
As we mentioned in the Introduction, our definition of a Nottingham algebra includes restrictions on $p$ and $q$,
which we will partly justify below.

\begin{definition}
In this paper a {\em Nottingham algebra} is a thin Lie algebra, over a field of characteristic $p>3$,
with second diamond $L_q$, where $q>5$ is a power of $p$.
\end{definition}

In this paper we use the left-normed convention
for iterated Lie products, hence $[abc]$ stands for $[[ab]c]$.
We also use the shorthand
$[ab^i]=[ab\cdots b]$,
where $b$ occurs $i$ times.

Let us start a bit more generally and consider first a thin Lie algebra $L$ with $\dim(L_3)=1$.
Then there is a nonzero element $y$ of $L_1$, unique up to a scalar multiple, such that $[L_2y]=0$.
Extending to a basis $x,y$ of $L_1$, this means $[yxy]=0$.
%, and we will keep this notation through the paper.
It is not hard to deduce from this relation that no two consecutive components in such a thin Lie algebra $L$
can both be diamonds, see~\cite{Mat:sandwich} for a proof.
Thus, any two diamonds are separated by one or more one-dimensional homogeneous components.

More delicate arguments in~\cite{Mat:sandwich}
show that any thin Lie algebra $L$
with second diamond occurring past $L_5$
satisfies $[Lyy]=0$, which means $(\ad y)^2=0$.
According to a well-known definition of Kostrikin's,
in odd characteristic that means $y$
is a {\em sandwich element} of $L$.
The significance of this fact is discussed in~\cite{Mat:sandwich}.
In particular, Nottingham algebras as defined here satisfy $(\ad y)^2=0$.
One of our reasons for excluding $q=5$ from our definition of a Nottingham algebra
is the existence of the thin Lie algebra of classical type $A_2$ of~\cite{CMNS} (and~\cite{Mat:thin-groups})
which has a diamond in each degree congruent to $\pm 1$ modulo $6$ (in any characteristic except $2$).
In particular, that thin Lie algebra has $L_5$ and $L_7$ as second and third diamond, which easily implies $[L_5yy]\neq 0$.

From now on let $L$ be a Nottingham algebra with second diamond $L_q$.
Then the element $y$ centralizes each homogeneous component from $L_{2}$ up to $L_{q-2}$.
That is an nontrivial assertion proved in~\cite{CaJu:quotients}, and relies on the theory of graded Lie algebras of maximal class established in~\cite{CMN,CN}.
Consequently, $L_i$ is spanned by $[yx^{i-1}]$ for $2\le i<q$.
In particular, $v_1=[yx^{q-2}]$ spans the component $L_{q-1}$ and, in turn, $[v_1x]$ and $[v_1y]$ span the second diamond $L_q$.
The subscript in $v_1$ is motivated by a convenience
of numbering the diamonds in a certain way,
and this notation turns out to be more natural than denoting it $v_2$ in some earlier papers.
However, such explicit numbering will not play any role in this paper.

It is now easy to see that one may redefine $x$ in such a way that
\[
[v_1xx]=0=[v_1yy] \quad\textrm{and} \quad [v_1yx]=-2[v_1xy],
\]
see~\cite[Section~3]{AviMat:A-Z} for a cleaner excerpt of the original argument in~\cite{Car:Nottingham}.
In the rest of this paper we refer to such $x$ and $y$ as {\em standard generators} of $L$.
Each of them is only determined up to a scalar multiple, but a different choice will not affect our definitions below,
in particular the definition of a diamond's type.
Because $[yx^q]=[v_1xx]=0$, we have $(\ad x)^q=0$.
Indeed, since $(\ad x)^q$ is a derivation of $L$, its kernel is a subalgebra, but then that must equal $L$ as both generators $x$ and $y$ belong to it.

We recall the definition of {\em type} of a diamond as introduced in ~\cite{CaMa:Nottingham}.
(Note that diamond types are defined differently for thin Lie algebras with second diamond $L_{2q-1}$, see~\cite{CaMa:thin}.)
We do not assign a type to the first diamond $L_1$.
Let then $L_m$ be a diamond past $L_1$, that is, a two-dimensional homogeneous component of $L$ with $m>1$.
Because no two consecutive homogeneous components can be diamonds, $L_{m-1}$ is one-dimensional, and so is $L_{m+1}$.
If $w$ spans $L_{m-1}$, then $L_m$ is spanned by $[wx]$ and $[wy]$,
and $L_{m+1}$ is spanned by $[wxx]$, $[wxy]$, $[wyx]$ and $[wyy]$.
The following definition encodes particular relations
among the latter four elements.

\begin{definition}\label{def:type}
Let $L$ be a Nottingham algebra,
with second diamond $L_q$ and standard generators $x$ and $y$.
Let $L_m$ be a diamond of $L$, with $m>1$,
and let $w$ be a nonzero element in $L_{m-1}$.
%thus $L_{m}$ is spanned by the elements $[wx]$ and $[wy]$.
\begin{itemize}
\item[(a)]
We say $L_m$ is a diamond of {\em finite type} $\mu$, where $\mu\in\F$,
if
\begin{equation*}
[wxx]=0=[wyy] \quad\textrm{and} \quad \mu[wyx]=(1-\mu)[wxy].
\end{equation*}
\item[(b)]
We say $L_m$ is a diamond of {\em infinite type} if
\begin{equation*}
[wxx]=0=[wyy] \quad\textrm{and} \quad [wyx]=-[wxy].
\end{equation*}
%or {\em a diamond with a type} when we wish to leave the type unspecified.
\end{itemize}
\end{definition}

In particular, this definition applies to the second diamond $L_{q}$, which therefore has invariably type $\mu=-1$.

It is by no means obvious that a two-dimensional
component $L_m$ of a Nottingham algebra $L$ should satisfy
Definition~\ref{def:type} for some value of $\mu$.
More precisely, while $[wyy]=0$ follows from $(\ad y)^2=0$,
whence $[wyx]$ spans $L_{m+1}$ according to the covering property, and then
$\mu[wyx]=(1-\mu)[wxy]$ can always be attained by an appropriate choice of $\mu$,
%if we allow it to possibly be $\infty$,
the remaining relation $[wxx]=0$ is not easy to establish.
We will call a diamond $L_m$ {\em a diamond with a type} if $[wxx]=0$, whence those relations hold for some value of $\mu$ which we wish to leave unspecified.
This terminology will include the special cases $\mu=0$ and $\mu=1$, which we discuss next.
One of the consequences of our main result, Theorem~\ref{thm:distance_intro}
is that every two-dimensional component past $L_1$
of a Nottingham algebra satisfies Definition~\ref{def:type},
and hence is a diamond with a type.
(This conclusion makes our definition of diamond with a type
eventually redundant.)

The values $\mu=0$ and $\mu=1$ cannot actually occur in
Definition~\ref{def:type}.
If $\mu=0$ then the relations $[wxx]=0=[wxy]$
would imply that the element $[wx]$ is central,
and hence vanish because of blanket infinite-dimensionality
assumption on $L$.
Similarly, if $\mu=1$ then the element $[wy]$ would be central,
and hence vanish.
Thus, strictly speaking, diamonds of type $0$ or $1$ cannot occur,
at least if we insist that a diamond should have dimension two,
as in Definition~\ref{def:type}.
Nevertheless, it is convenient for a uniform description
of the diamonds patterns
in Nottingham algebras to allow ourselves to
call {\em diamonds of type $0$ or $1$}
certain one-dimensional homogeneous components $L_m$,
as long as they satisfy the relations of
Definition~\ref{def:type}
with $\mu=0$ or $1$.
This leads us to the following definition.

\begin{definition}\label{def:type-fake}
Let $L$ be a Nottingham algebra,
with second diamond $L_q$ and standard generators $x$ and $y$.
Let $L_{m-1}$ be a one-dimensional component, spanned by $w$, with $m>1$.
%and assume $L_{m+1}\neq 0$.
\begin{itemize}
\item[(a)]
We say $L_m$ is a diamond of {\em of type $1$} if
\begin{equation*}
[wxx]=0 \quad\textrm{and} \quad [wy]=0.
\end{equation*}
\item[(b)]
We say $L_m$ is a diamond {\em of type $0$} if
\begin{equation*}
[wx]=0 \quad\textrm{and} \quad [wyy]=0.
\end{equation*}
\end{itemize}
\end{definition}

 %\myframe{Say we do not assign a type to $L_1$, and will not include $L_1$ in counting diamonds of a certain type:
 %for example the second diamond $L_q$ will also be called the first diamond of finite type.}

We refer to diamonds of type $0$ or $1$ as {\em fake diamonds}
to distinguish them from the {\em genuine diamonds} of dimension two.
The necessity of including fake diamonds in a treatment
of Nottingham algebras
arises from the fact that in various notable instances diamonds occur at regular intervals, with types following an arithmetic progression
(see Theorem~\ref{thm:k=q} below).
When such arithmetic progression of types
passes through $0$ or $1$, fake diamonds occur.

However, this carries an inherent ambiguity:
whenever $L_m$ satisfies the definition of a diamond of type $1$
(which amounts to $[L_{m-1}y]=0$ and $[L_mx]=0$), the next homogeneous component $L_{m+1}$ satisfies the definition of a diamond of type $0$
(because $[L_my]=L_{m+1}$ due to the covering property, and then $[L_{m+1}y]=0$ due to $[Lyy]=0$).
Thus, if $w$ spans $L_{m-1}$ then $[wx]$ spans $L_m$
and $[wxy]$ spans $L_{m+1}$, and we have the relations
\begin{equation}\label{eq:fake_diamonds}
[wy]=0, \qquad
[wxx]=0, \qquad
[wxyy]=0.
\end{equation}
The first and second are those in part~(a) of
Definition~\ref{def:type-fake}, and the second and third
are those in part $(b)$ if we use $w'=[wx]$ instead of $w$ in it.
For various reasons it is inconvenient to simultaneously regard
two consecutive components as fake diamonds, and so we adopt the following convention.

\begin{convention}\label{convention:fake}
% Whenever we are in the presence of fake diamonds,
% with Equation~\eqref{eq:fake_diamonds} holding, where
% $w$ spans $L_{m-1}$,
Whenever we have a diamond $L_m$ of type $1$, necessarily followed by a diamond $L_{m+1}$ of type $0$,
we allow ourselves to call (fake) diamond precisely one of $L_m$
and $L_{m+1}$, of the appropriate type, and not the other.
\end{convention}

In several cases there is a natural choice between
calling $L_m$ a diamond of type $1$, or $L_{m+1}$ a diamond of type $0$,
which makes diamonds (including the fake ones)
occur at regular distances,
with a difference of $q-1$ in degrees.
We illustrate that through the following existence result, which will be clarified and expanded in commentaries to follow.

\begin{theorem}\label{thm:k=q}
There exist Nottingham algebras $L$ with second diamond $L_q$,
where (possibly fake) diamonds occur in  each degree congruent to $1$ modulo $q-1$, and have types described by any of the following patterns:
\begin{itemize}
\item[(a)]
all diamonds of type $-1$~\cite{Car:Nottingham};
\item[(b)]
all diamonds of finite types following a non-constant arithmetic progression~\cite{Avi,AviMat:A-Z};
%(existence proved in~\cite{Avi} and~\cite{AviMat:A-Z}, uniqueness in~\cite{CaMa:Nottingham});
%existence proved in~\cite{Avi} for finite types all in the prime field, and~\cite{AviMat:A-Z} otherwise
\item[(c)]
all diamonds of infinite type except for those in degrees $\equiv q\pmod{p^s(q-1)}$ for some $s>0$, which have type $-1$~\cite{AviMat:A-Z};
%(existence proved in~\cite{AviMat:A-Z}, uniqueness proved or not?);
\item[(d)]
all diamonds of infinite type except for those in degrees $\equiv q\pmod{p^s(q-1)}$ for some $s>0$, which have finite types following a non-constant arithmetic progression~\cite{AviMat:mixed_types}.
%(existence proved in~\cite{AviMat:mixed_types}, uniqueness yet unproven);
\end{itemize}
%In all cases, each homogeneous component which is not a diamond or immediately precedes a diamond is centralized by $y$.
%The thin Lie algebras of cases (a) and (b) are uniquely determined by an appropriate finite-dimensional quotient~\cite{Car:Nottingham}.
\end{theorem}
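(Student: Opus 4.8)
The statement is a realization (existence) result, so for each of the four patterns the task is to exhibit an explicit Nottingham algebra whose diamonds follow that pattern. The plan is to produce each $L$ by the loop-algebra construction: one starts from a finite-dimensional graded simple Lie algebra $S$ of Cartan type carrying a suitable cyclic grading, forms a graded subalgebra of the loop algebra $S\otimes\F[t,t^{-1}]$, and checks in each case that the result is thin, has second diamond in degree $q$, and carries exactly the prescribed types. These are precisely the constructions of the cited papers, and the verifications follow a common scheme.

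Concretely, one takes a seed $S=\bigoplus_{\bar\imath\in\Z/N}S_{\bar\imath}$ of dimension $q$, with $N=q-1$, in which exactly one homogeneous component is two-dimensional and the remaining $q-2$ are one-dimensional (so the dimensions add up to $q$). The algebra $L$ is then the $\N$-graded subalgebra of the loop algebra whose degree-$i$ component is the copy of $S_{\bar\imath}$, with $\bar\imath$ the residue of $i$ modulo $N$, placed at the appropriate power of $t$; this makes $L$ infinite-dimensional with trivial centre and $\dim L_i=\dim S_{\bar\imath}\le 2$. The covering property is inherited from the simplicity and the grading of $S$, so $L$ is thin, its diamonds are exactly the copies of the two-dimensional component, and these occur in the degrees $\equiv 1\pmod{q-1}$; in particular the first recurs at degree $1+(q-1)=q$, giving the second diamond $L_q$, in agreement with Theorem~\ref{thm:distance_intro}(d). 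The type of each diamond is read off from the structure constants of $S$ at its two-dimensional component, twisted by the eigenvalue of the grading (toral) element used in the loop.

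The four patterns arise from four choices of seed and twist. For (a) the seed is the Zassenhaus (Witt) algebra $W(1;n)$ of dimension $q$, whose structure constants give constant type $-1$, recovering Caranti's Nottingham Lie algebra~\cite{Car:Nottingham}. For (b) one twists a Cartan-type seed by a toral element whose eigenvalue grows linearly along the grading, so that the successive diamond types form a non-constant arithmetic progression~\cite{Avi,AviMat:A-Z}. For (c) and (d) one refines the cyclic grading to order $p^s(q-1)$ by a toral twist of order $p^s$, chosen so that the generic diamond type is infinite while at the degrees where the eigenvalue returns to its distinguished value, namely those $\equiv q\pmod{p^s(q-1)}$, the type degenerates to $-1$ in case~(c)~\cite{AviMat:A-Z} and to the terms of a non-constant finite arithmetic progression in case~(d)~\cite{AviMat:mixed_types}.

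The main obstacle is the determination of the types throughout each $L$. One must pin down the full multiplication, typically by an induction on the degree, and track how the pair $[wx],[wy]$ spanning one diamond propagates through the intervening one-dimensional components to the next diamond, since the recursion governing these structure constants is exactly what produces the arithmetic progressions in (b) and (d) and the $p^s$-periodic degeneration to infinite type in (c) and (d). A separate finite check confirms that the second diamond occurs precisely in degree $q$ and not earlier, so that each constructed algebra genuinely qualifies as a Nottingham algebra.
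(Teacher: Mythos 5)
The paper does not actually prove this theorem: it is an existence statement delegated to the cited references, and the surrounding text only sketches the provenance (cyclic gradings of Zassenhaus algebras for case~(a), finite presentations and loop-algebra constructions of Cartan-type algebras for the others). Your overall strategy --- realize each $L$ inside a loop algebra of a cyclically graded finite-dimensional simple Lie algebra of Cartan type and read the diamond types off the structure constants --- is indeed the method of those references, so at the level of approach you are aligned with what the theorem rests on.

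However, your concrete scheme contains a step that fails for three of the four cases: the uniform choice of a seed $S$ of dimension $q$ graded over $\Z/(q-1)$ only fits case~(a). A dimension count shows why. In a thin algebra every homogeneous component is nonzero (of dimension $1$ or $2$), so the seed's cyclic grading can have no empty residue classes, and the dimensions of the $S_{\bar\imath}$ over one full period must sum to the total dimension of the pattern over that period. In cases~(c) and~(d) the type pattern has period $p^s(q-1)$, containing $p^s$ diamonds, so the seed must have dimension about $p^s q$ (these are the Albert--Zassenhaus algebras of \cite{AviMat:A-Z} and the Hamiltonian-type algebras of \cite{AviMat:mixed_types}, of dimension $p^{n+s}$ with $q=p^n$); your proposal of a $q$-dimensional seed with a ``toral twist of order $p^s$'' would leave most residue classes modulo $p^s(q-1)$ empty, producing zero components and destroying thinness. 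Similarly in case~(b) a non-constant arithmetic progression of types mod $p$ has period $p$, so the pattern has period $p(q-1)$ and the seed has dimension $pq-2$ (the two fake diamonds per period, where the progression passes through $0$ and $1$, each being one-dimensional), again not $q$. Beyond this, asserting that ``the covering property is inherited from the simplicity and the grading of $S$'' waves through the bulk of the verification carried out in the cited papers, and in case~(a) there is the additional subtlety, flagged in Section~\ref{sec:chains} of this paper, that Zassenhaus algebras possess a particular central extension, so the graded algebra of the Nottingham group is not identified with a plain loop algebra without extra work. As written, then, the construction would not produce the algebras in cases~(b)--(d), and case~(a) is under-proved.
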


Nottingham algebras as in case~(a) of Theorem~\ref{thm:k=q}
were explicitly constructed in~\cite{Car:Nottingham},
using a certain cyclic grading of Zassenhaus algebras.
The special case where $q=p$ is the graded Lie algebra associated with the lower central series of the Nottingham group, thus justifying their name.
The convenience of introducing fake diamonds was first observed in~\cite{CaMa:Nottingham},
where various Nottingham algebras were identified
through finite presentations,
with diamonds in all degrees congruent
to $1$ modulo $q-1$,
and of types following a non-constant arithmetic progression, as in case~(b) above.
If that arithmetic progression passes through $0$, that is, if it runs through the prime field (as the second diamond has invariably type $-1$),
then those diamonds must include fake diamonds, of both types $0$ and $1$.

% Nottingham algebras where the third diamond has infinite type include those of cases~(c) and~(d).

In all cases of Theorem~\ref{thm:k=q}, each homogeneous component which is not a diamond or immediately precedes a diamond is centralized by $y$.
In fact, according to our Theorem~\ref{thm:distance_intro}
this is a general feature of Nottingham algebras,
provided that we appropriately include fake diamonds
when needed.
Consequently, specifying all degrees
in which diamonds occur (possibly fake and making use of Convention~\ref{convention:fake}),
and their types, describes a Nottingham algebra completely.
Because each of the Nottingham algebras of
Theorem~\ref{thm:k=q} has diamonds in each degree congruent to $1$ modulo $q-1$,
the distance between consecutive diamonds
is invariably $q-1$, provided that we assign
an appropriate type $0$ or $1$ to each fake diamond.

It is known that the algebras of Theorem~\ref{thm:k=q} (which over a finite field, say, are countably many),
constitute a minority among all Nottingham algebras.
This is due to two distinct constructions presented by David Young in his PhD thesis~\cite{Young:thesis},
where two Nottingham algebras $\mathcal{T}_{q,1}(M)$ and $\mathcal{T}_{q,2}(M)$, both with second diamond $L_q$,
are produced from any given graded Lie algebra $M$ of maximal class having at most two distinct two-step centralizers.
Because there are uncountably many such $M$ over any given field, and the Nottingham algebras thus produced are pairwise non-isomorphic,
it follows that there are uncountably many Nottingham algebras over any given field.
Young described further Nottingham algebras with second diamond $L_q$, namely a countable family of {\em Nottingham deflations} $N(q,r)$, where $r$ is a power of $p$,
and two Nottingham algebras $L_1(q)$ and $L_2(q)$ which have no further genuine diamond past $L_q$ (and hence have coclass two).
One may also regard $L_1(q)$ as a limit case $N(q,\infty)$.

In each of the Nottingham algebras constructed in~\cite{Young:thesis}, all diamonds past $L_q$ have infinite type or are fake.
We refrain from providing further details on those algebras algebras except for $L_1(q)$ and $L_2(q)$,
which are easy to describe and illustrate an important point on diamond distances.
Beyond their second diamond $L_q$, each of them has a fake diamond in each degree $m>q$ with $m\equiv -1\pmod{q}$,
but those fake diamonds are all of type $1$ in case of $L_1(q)$, and all of type $0$ in case of $L_2(q)$.
Thus, in both algebras we observe distances of $q$ rather than $q-1$ between certain fake consecutive diamonds of the same type (either both $0$ or both $1$).
However, we may use the ambiguity of fake diamonds to our advantage
and allow ourselves, when necessary, to view a fake diamond as of type $1$ when computing its distance from the previous diamond
and then as of type $0$ when computing its distance from the following diamond.
With this interpretation it remains true that any two given consecutive diamonds in any of the known examples of Nottingham algebras
have distance $q-1$ if suitably interpreted in case both diamonds are fake.
Proving this in great generality is the goal of Sections~\ref{sec:chains} and~\ref{sec:chain_proof}.

Throughout the paper we make extensive use of the {\em generalized Jacobi identity}
\[
[a[bc^n]]=\sum_{i=0}^{n} (-1)^i \binom{n}{i} [ac^{i}bc^{n-i}].
\]
Two special instances which often occur are $[a[bc^q]]=[abc^q]-[ac^qb]$ (which amounts to $(\ad c)^q$ being a derivation), and
$
[a[bc^{q-1}]]=\sum_{i=0}^{q-1}\;[ac^{i}bc^{q-1-i}],
$
due to $\binom{q-1}{i}\equiv (-1)^i\pmod{p}$.
More generally, the binomial coefficients involved in the generalized Jacobi identity can be efficiently evaluated modulo $p$ by means of Lucas' theorem:
%, see \cite[p. 271]{Dickson1}:
if $q$ is a power of $p$ and  $a,b,c,d$ are non-negative
integers with $b,d<q$, then
$
\binom{aq+b}{cq+d}\equiv \binom{a}{c}\binom{b}{d} \pmod p.
$

\section{Diamond distances in Nottingham algebras}\label{sec:chains}

Let $L$ be a Nottingham algebra with second diamond $L_q$ and standard generators $x$ and $y$.
According to~\cite[Lemma~2.2]{CaMa:Nottingham}, as extended in~\cite[Proposition~5.1]{Young:thesis} to cover the case $p=5$,
all homogeneous components from $L_{q+1}$ up to $L_{2q-3}$ are centralized by $y$.
In particular, a third diamond of $L$, if genuine or fake of type $0$, cannot occur in degree lower than $2q-1$.
Combining this information with Proposition~\ref{prop:centr_by_y} below we find that
$L_{2q-1}$ must be a diamond, and with a type.
Hence the distance between the second and the third diamond (properly interpreted when the latter is fake) of a Nottingham algebra is invariably $q-1$.
The main goal of this paper is suitably extending this statement to the distance
between any two consecutive diamonds of a Nottingham algebra.
As we discussed in Section~\ref{sec:types}, to do so we may possibly have to interpret certain fake diamonds in two ways.
We start with the easier part, which is establishing an upper bound on such distance.
In Proposition~\ref{prop:centr_by_y} we approach that task slightly more generally than would be strictly necessary for the goals of this paper.

Before we proceed further we consider which homogeneous components of a Nottingham algebra can be centralized by $y$.
Recall from Section~\ref{sec:types} that no two consecutive components of a Nottingham algebra can be diamonds.
Note also that if $L_m$ is a genuine diamond then $[L_my]\neq 0$.
In fact, if this were not the case then $[wxx]$ and $[wyx]$ would be proportional, where $w$ spans $L_{m-1}$, and then
some linear combination $z$ of $[wx]$ and $[wy]$ would violate the covering property.

In a Nottingham algebra no more than two consecutive components can be not centralized by $y$.
This is Assertion~(a) of
Theorem~\ref{thm:distance_intro},
and is easy to prove.
In fact, if $[L_{m-2}y]=0$ and $[L_{m-1}y]\neq 0$ then $[L_{m-2}x]=L_{m-1}$ by the covering property, whence $\dim(L_{m-1})=1$.
Also, $[L_{m-1}yy]=0$ because $(\ad y)^2=0$, and hence $\dim(L_{m+1})=1$ as well, again by the covering property.
If $\dim(L_m)=1$ then $[L_{m-1}y]=L_m$, and hence $[L_my]=0$.
If $\dim(L_m)=2$ and $w$ spans $L_{m-1}$, then by the covering property $[wyx]$ spans $L_{m+1}$, because $[wyy]=0$.
But then $0=[w[xyy]]=[wxyy]-2[wyxy]+[wyyx]=-2[wyxy]$
implies $[L_{m+1}y]=0$.

The above argument has made essential use of $(\ad y)^2=0$.
Now we use the other crucial relation $(\ad x)^q=0$ to produce an upper bound on the number of consecutive components centralized by $y$,
and further information.

\begin{prop}\label{prop:centr_by_y}
Let $L$ be a Nottingham algebra with second diamond $L_q$ and standard generators $x$ and $y$.
%\fbox{and $L_{m+q}\neq 0$ if we allow finite dim}
Then the following hold.

\begin{enumerate}
\item[(a)]
No more than $q-1$ homogeneous consecutive components of $L$ can be centralized by $y$.
\item[(b)]
If $y$ centralizes $L_{m},\ldots,L_{m+q-2}$, then
$L_{m}$ is a diamond of type $0$ and $L_{m+q-1}$ is a diamond of type $1$.
\item[(c)]
If $y$ centralizes $L_m,\ldots,L_{m+q-3}$, but not $L_{m-1}$ or $L_{m+q-2}$, then
either $L_m$ is a diamond of type $0$ and $L_{m+q-1}$ is a diamond with a type different from $1$,
or $\dim(L_{m-1})=2$ and $L_{m+q-2}$ is a diamond of type $1$.
\item[(d)]
If $y$ centralizes $L_{m+1},\ldots,L_{m+q-3}$, but not $L_{m+q-2}$, and $\dim(L_m)=2$,
then $L_{m+q-1}$ is a diamond with a type different from $1$.
%\item[(d)]
 %If $q-3$ consecutive components of $L$ are centralized by $y$,
 %and such sequence cannot be further extended either way,
 %then the sequence is preceded by a two-dimensional component, and the component following the sequence immediately precedes a two-dimensional component.
\end{enumerate}
\end{prop}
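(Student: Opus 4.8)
The plan is to reduce all four assertions to a single mechanism. Whenever $y$ centralizes a consecutive block of components, the covering property forces each component of the block to be the image of the previous one under the \emph{single} operator $\ad x$: a block centralized from $L_j$ onwards consists of the lines $\la[zx^i]\ra$ for a fixed generator $z$ of $L_j$. The relation $(\ad x)^q=0$ then makes $[zx^q]$ vanish, and the entire proposition comes from tracking where that vanishing lands relative to the components already known to be nonzero (recall all $L_i$ with $i\ge1$ are nonzero, by the covering property). For part~(a), if $y$ centralized $L_m,\dots,L_{m+q-1}$ then $L_{m+i}=\la[zx^i]\ra$ for $0\le i\le q$, whence $L_{m+q}=\la[zx^q]\ra=0$, a contradiction. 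For part~(b) I would run the same computation on the shorter block $L_m,\dots,L_{m+q-2}$: now $[zx^{q-1}]$ spans $L_{m+q-1}$ and is nonzero, while $[zx^q]=0$ gives $[L_{m+q-1}x]=0$, so by covering $[L_{m+q-1}y]\neq0$, and checking $[w'y]=0$ and $[w'xx]=[zx^q]=0$ for $w'=[zx^{q-2}]$ exhibits $L_{m+q-1}$ as a diamond of type $1$. At the left end part~(a) shows $L_{m-1}$ is not centralized; since $L_m$ is centralized it is one\hyp dimensional (genuine diamonds are never centralized), so every $u\in L_{m-1}$ satisfies $[ux]=c_u z$, and $0=[ux^q]=c_u[zx^{q-1}]$ forces $c_u=0$. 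Thus $[L_{m-1}x]=0$; as a genuine diamond always has $[L_dx]\neq0$ (else $\ad y$ would be injective on it, making $L_{d+1}$ a second consecutive diamond), $L_{m-1}$ is one\hyp dimensional and $L_m=\la[wy]\ra$ is a diamond of type $0$.

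For part~(c) I would split on $\dim L_{m-1}$, the point being that the $q$\hyp th power now lands in different degrees. If $\dim L_{m-1}=2$, the coefficient argument gives $0=[ux^q]=c_u[zx^{q-1}]$ for $u\in L_{m-1}$; were $[zx^{q-1}]\neq0$ we would get $[L_{m-1}x]=0$, impossible for a genuine diamond, so $[zx^{q-1}]=0$, and this is exactly the relation making $L_{m+q-2}$ a diamond of type $1$ (with $[zx^{q-3}]$ playing the role of the generator before it). If instead $\dim L_{m-1}=1$, then $L_{m-1}$ is an \emph{isolated} non\hyp centralized component (it cannot be the one\hyp dimensional predecessor of a genuine diamond, since $L_m$ is centralized), so $L_{m-2}$ is centralized; launching the engine from a generator $u$ of $L_{m-2}$ lands the $q$\hyp th power on $[zx^{q-2}]$, which spans the \emph{nonzero} component $L_{m+q-2}$, forcing $[L_{m-1}x]=0$ and hence $L_m$ of type $0$. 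In either branch the right\hyp hand diamond inherits the vanishing $[sxx]=0$ from $[zx^q]=0$, and its type differs from $1$ automatically, because type $1$ would require the preceding component $L_{m+q-2}$ to be centralized, which it is not.

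For part~(d), where $L_m$ is a genuine diamond, the trick is to choose the generator of $L_{m+1}$ well. Since $[wy]\neq0$ spans a line in $L_m$ with $[wyy]=0$, the covering property gives $L_{m+1}=\la[wyx]\ra$, so I take $z=[wyx]$; then $[zx^{q-1}]=[wyx^q]=0$ \emph{outright}, which is precisely the type relation $[sxx]=0$ for $s=[zx^{q-3}]$ spanning $L_{m+q-2}$. That $L_{m+q-1}$ really is a diamond with a type (possibly the fake type $0$) then follows by excluding the degenerate alternative: if $L_{m+q-1}$ were one\hyp dimensional with $[sx]\neq0$, then $[sxx]=0$ would give $[syx]=0$, and together with $[syy]=0$ this collapses $L_{m+q}$ to zero. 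Its type is again $\neq1$ since $L_{m+q-2}$ is not centralized.

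The routine part is the bookkeeping of dimensions and of which components are centralized, carried out with the already\hyp established facts that genuine diamonds are never centralized, that no two components are simultaneously diamonds, and that at most two consecutive components escape $y$. The main obstacle is the delicate placement of the seed element: the identity $(\ad x)^q=0$ must be launched from degree $m$, $m-1$, $m-2$, or from the special generator $[wyx]$ according to the case, so that its image falls either on a component guaranteed nonzero (to force a coefficient to vanish) or on the element whose vanishing is the sought type relation. Getting this placement right in part~(c), together with ruling out the degenerate one\hyp dimensional configurations via nonvanishing of homogeneous components, is where the real care lies.
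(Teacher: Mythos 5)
Your proof is correct and follows essentially the same approach as the paper's: both arguments propagate $(\ad x)^q=0$ along the $x$-chain through a block of $y$-centralized (hence one-dimensional) components and read off the type relations $[wx]=0$ or $[wxx]=0$ from where the vanishing $q$-th power lands, with your choice of ``seed'' in degree $m$, $m-1$, $m-2$ or $[wyx]$ matching the paper's case analysis on $[L_{m-1}x]$. The only point stated a little loosely is the justification that $L_{m-2}$ is centralized in the one-dimensional branch of~(c): the correct reason is that a maximal block of two consecutive non-centralized components must end in a \emph{genuine} diamond, which the one-dimensional $L_{m-1}$ cannot be, so its block reduces to $\{L_{m-1}\}$.
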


%The above result generally provides also information on the type of the diamond after the chain, but not the one before, for which we will need later arguments.

\begin{proof}
Suppose
$[L_{m-2}y]=0$ and
$[L_{m-1}y]\neq 0$, for some $m$.
In particular, because $y$ cannot centralize any genuine diamond, $L_{m-2}$ is one-dimensional,
and hence so is $L_{m-1}=[L_{m-2}x]$.
Then $[L_{m-1}yx]=L_{m+1}$ because of the covering property.
Because $(\ad x)^q=0$ we have $[L_{m-1}yx^q]=0$.
Therefor, $[L_{m+j}y]\neq 0$ for some $0< j\le q-1$,
otherwise $L_{m+q}=0$, against infinite-dimensionality of $L$.
Assertion~(a) follows, that no more than $q-1$ consecutive components of $L$ can be centralized by $y$.

Now $[L_{m-1}x]$ may be zero, in which case $L_m$ is a diamond of type $0$, or not.
Also, if $[L_{m-1}x]$ is nonzero then it may equal $[L_{m-1}y]$, or not, in which case $\dim(L_m)=2$.
%We look at those three cases in turn.

Suppose $[L_{m-1}x]$ equals $[L_{m-1}y]$, and hence both equal $L_m$.
%[Then automatically $y$ centralizes $L_m$ because $(\ad y)^2=0$; can this help to extend (d)?]
If $y$ centralizes $L_{m},\ldots,L_{m+q-3}$, as assumed in both Assertions~(b) and (c), then
$L_{m+q-2}=[L_{m-2}x^q]=0$ because $(\ad x)^q=0$.
This contradiction shows that if $y$ centralizes $L_{m},\ldots,L_{m+q-3}$, whence $\dim(L_m)=1$, then
$L_m$ is a diamond of type $0$.

If $y$ centralizes all components from $L_m$ up to $L_{m+q-2}$, then
$L_{m+q-2}=[L_{m-1}yx^{q-2}]$ is one-dimensional, is centralized by $y$, and
$[L_{m+q-2}xx]=[L_{m-1}yx^q]=0$.
Hence $L_{m+q-1}$ is a (fake) diamond of type $1$.
%Here $L_m$ cannot be two-dimensional, otherwise it could not be centralized by $y$.
Thus, this establishes Assertion~(b).

Now suppose $y$ centralizes $L_{m},\ldots,L_{m+q-3}$, but not $L_{m+q-2}$.
Then again
$L_{m+q-2}=[L_{m-1}yx^{q-2}]$ is one-dimensional, and it satisfies
$[L_{m+q-2}yy]=0$ because $(\ad y)^2=0$, and also
$[L_{m+q-2}xx]=[L_{m-1}yx^q]=0$.
Hence $L_{m+q-1}$ is a diamond with a type, and that type is not $1$ because $[L_{m+q-2}y]\neq 0$.
This is the first alternative outcome of Assertion~(c),
and we have already shown $[L_{m-1}x]=0$, whence $L_m$ is a diamond of type $0$.

Finally, assume $\dim(L_m)=2$.
Then necessarily $[L_my]\neq 0$,
hence in this case we find no more than $q-2$ consecutive components centralized by $y$.
Also, if $y$ centralizes $L_{m+1},\ldots,L_{m+q-2}$, then
$L_{m+q-2}=[L_{m-1}yx^{q-2}]$ satisfies
$[L_{m+q-2}y]=0$ and
$[L_{m+q-2}xx]=[L_{m-1}yx^q]=0$,
hence $L_{m+q-1}$ is a diamond of type $1$.
This is the second alternative outcome of Assertion~(c), although expressed here with indices increased by one.
However, if $y$ centralizes $L_{m+1},\ldots,L_{m+q-3}$ but not $L_{m+q-2}$, then
again $L_{m+q-1}$ is a diamond with a type, but that type is not $1$.
This proves Assertion~(d).
\end{proof}

The extremal situation described in Assertion~(b) of Proposition~\ref{prop:centr_by_y},
where $y$ centralizes $q-1$ consecutive components,
may also be read as two fake diamonds of the same type (either both $0$ or both $1$) with a degree difference of $q$,
as we illustrated in Section~\ref{sec:types} on Young's algebras $L_1(q)$ and $L_2(q)$.
%(That occurs in the Nottingham deflations.)

Note that Assertion~(c) of Proposition~\ref{prop:centr_by_y} includes two possibilities
for having precisely $q-2$ consecutive components centralized by $y$ between a diamond $L_m$ and the next one:
one case is if $L_m$ has type $0$, whence $[L_my]=0$ as well,
and $L_{m+q-1}$ is a diamond not of type $1$;
the other case is if  $\dim(L_m)=2$ and $L_{m+q-1}$ has type $1$, whence $[L_{m+q-2}y]=0$ as well.
If $L_m$ has type $0$ and $L_{m+q-1}$ has type $1$ then we get the situation of Assertion~(b).

Assertion~(d) of Proposition~\ref{prop:centr_by_y}
does not cover all situations where $y$ centralizes a sequence of $q-3$ homogeneous components of $L$ and no more.
In particular, another such situation arises where $L_m$ is a diamond of type $1$ and $L_{m+q-1}$ is a diamond with a type different from $1$,
as in the Nottingham algebras with all diamonds of finite type, see~\cite{CaMa:Nottingham}.

In the above proof of Proposition~\ref{prop:centr_by_y} we had to consider the possibility of $\dim(L_{m-1})=1$ and $[L_{m-1}y]\neq 0$ but $[L_{m-1}x]=L_m$.
That situation actually never occurs in Nottingham algebras, but excluding it requires information on a previous diamond,
and hence can only be established inductively.

We extract some information from Proposition~\ref{prop:centr_by_y}
in the precise form in which we will need it later.

\begin{cor}\label{cor:next_diamond}
Let $L$ be a Nottingham algebra with second diamond $L_q$ and standard generators $x$ and $y$.
Suppose $L_m$ is a diamond of type $\mu\in\F\cup\{\infty\}$,
and suppose $y$ centralizes $L_{m+1},\ldots,L_{m+q-3}$.
If $\mu=1$ assume, in addition, that $y$ does not centralize
$L_{m+q-2}$.
Then $L_{m+q-1}$ is a diamond with a type.
\end{cor}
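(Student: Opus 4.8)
The plan is to reduce the corollary to a single relation and then treat it by a dichotomy on the type $\mu$. Recall that $L_{m+q-1}$ is a diamond with a type precisely when a spanner $z$ of the preceding component $L_{m+q-2}$ satisfies $[zxx]=0$, the companion relation $[zyy]=0$ being automatic because $(\ad y)^2=0$. So the whole task is to establish $[zxx]=0$. The decisive structural distinction is whether $[L_{m-1}y]\neq 0$, which holds for every admissible type $\mu$ except $\mu=1$.

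For $\mu\neq 1$ I would argue as in Proposition~\ref{prop:centr_by_y}. Let $w$ span $L_{m-1}$; in each of these cases $[wy]$ spans a line in $L_m$ annihilated by $y$, and the hypothesis that $y$ centralizes $L_{m+1},\dots,L_{m+q-3}$ propagates this line along $\ad x$, so that $L_{m+q-2}$ is spanned by $z=[wyx^{q-2}]$. Then $[zxx]=[wyx^{q}]=0$ because $(\ad x)^q=0$. This is precisely the mechanism behind Assertions~(b)--(d): if $y$ also centralizes $L_{m+q-2}$ then $L_{m+q-1}$ is a fake diamond of type $1$, and otherwise it is a diamond of type different from $1$; in either event it has a type. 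Since $[zxx]=0$ comes for free from $(\ad x)^q=0$ here, no extra hypothesis is needed, which explains why the proviso in the statement is confined to $\mu=1$.

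The case $\mu=1$ is the main obstacle, and it is exactly the configuration Proposition~\ref{prop:centr_by_y} leaves untreated. Now $[L_{m-1}y]=0$, so if $u$ spans $L_{m-1}$ one must apply $\ad x$ first to leave $L_{m-1}$: one has $L_m=\langle[ux]\rangle$, the generating $y$ enters only one step later in $L_{m+1}=\langle[uxy]\rangle$, and $L_{m+q-2}$ is spanned by $z=[uxyx^{q-3}]$. Consequently $[zxx]=[uxyx^{q-1}]$ carries only $q-1$ trailing factors $x$ after the $y$, so it does not vanish directly from $(\ad x)^q=0$; this off-by-one is the crux. I would instead show that $s:=[zxx]$ is central and hence zero. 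First, $[sx]=[zxxx]=[uxyx^{q}]=0$ by $(\ad x)^q=0$. Second, the extra hypothesis that $y$ does not centralize $L_{m+q-2}$ gives $[zy]\neq 0$, so by the covering property together with $[zyy]=0$ the component $L_{m+q}$ is spanned by $[zyx]$; writing $s=\gamma[zyx]$ we obtain $[sy]=\gamma[zyxy]$. Finally $[zyxy]=0$, which I would get by applying the generalized Jacobi identity to $[z[xy^2]]=0$ (valid since $(\ad y)^2=0$ forces $[xy^2]=0$) and discarding the terms $[zxyy]$ and $[zyyx]$, both zero. Thus $[sx]=[sy]=0$, so $s$ lies in the centre, which is trivial, whence $s=0$ and $[zxx]=0$, as required.
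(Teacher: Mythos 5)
Your proof is correct and follows essentially the same route as the paper: dispose of $\mu\neq 1$ via Proposition~\ref{prop:centr_by_y} and $(\ad x)^q=0$, and for $\mu=1$ write $[zxx]$ as a multiple of $[zyx]$ and kill it using $[zxxx]=0$ and $[zyxy]=0$ (the latter from expanding $[z[xyy]]=0$). The only cosmetic difference is at the very end, where you conclude by centrality of $s$ while the paper observes directly that $[zyxx]$ spans $L_{m+q+1}$ and is therefore nonzero, forcing the proportionality constant to vanish; the ingredients are identical.
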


\begin{proof}
If $\mu\neq 1$ the conclusion follows
from Proposition~\ref{prop:centr_by_y}.
Thus, assume $L_m$ has type $1$, and let $w$ span $L_{m+q-2}$.
By assumption we have $[wy]\neq 0$,
and by the covering property
$[wyx]$ and $[wyy]$ span $L_{m+q}$.
Because $(\ad y)^2=0$ we find that $L_{m+q}$
is spanned by $[wyx]$.
Therefore,
$[wxx]=\alpha [wyx]$ for some scalar $\alpha$.
Consequently,
$[wxxx]=\alpha [wyxx]$,
but $[wxxx]=0$
because $0=[L_m[yx^q]]=[L_{m+q}x]$.
However, because
\[
0=[w[xyy]]=[wxyy]-2[wyxy]+[wyyx]=-2[wyxy],
\]
$[wyxx]$ spans $L_{m+q+1}$, and in particular is nonzero.
Therefore, $\alpha=0$, and hence $[wxx]=0$,
and we conclude that $L_{m+q-1}$ is a diamond
with a type.
\end{proof}

 %\myframe{Can we exclude the occurrence of $[vx]=[vy]$ in a Nottingham?
 %That is, the occurrence of a third distinct two-step centralizer?
 %[Check specialization of centralizers in [CN]]}

Corollary~\ref{cor:next_diamond} implies, in particular,
that if two consecutive diamonds are both genuine (and with a type) then the difference of their degrees cannot exceed $q-1$.
In fact, we aim to prove that such distance always equals $q-1$.
This statement will extend to when precisely one of two consecutive diamonds is fake,
provided that we make an appropriate choice of its type and degree,
making use of Convention~\ref{convention:fake}.

Proving that requires showing that enough consecutive homogeneous components after each diamond are centralized by $y$.
It is harder to establish, and it will be necessary to proceed inductively, with induction step provided by the following result, which we will prove in Section~\ref{sec:chain_proof}.

\begin{theorem}\label{thm:chain_recap}
Let $L$ be a Nottingham  algebra with second diamond $L_q$ and standard generators $x$ and $y$.
Let $L_{m}$ be a (possibly fake) diamond of $L$,
%of type $\mu \in \F\cup \{\infty\}$,
for some $m\geq 2q-1$,
of type $\mu$.
If $\mu$ equals $-1$ or $0$, assume in addition that $L_{m-q+1}$ is a diamond with a type $\lambda$,
and in case $\mu=0$ assume
$\lambda\neq 0$.
Suppose that $y$ centralizes every homogeneous component from $L_{m-q+2}$ up to $L_{m-2}$.
Then $y$ centralizes each homogeneous component from $L_{m+1}$ up to $L_{m+q-3}$.
\end{theorem}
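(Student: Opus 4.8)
The plan is to reduce the theorem to a single vanishing statement about a chain of elements emanating from $L_{m+1}$, and then to prove that statement by induction on the degree, using the generalized Jacobi identity together with the two structural relations $(\ad y)^2=0$ and $(\ad x)^q=0$. First I would fix a nonzero $w$ spanning the one-dimensional component $L_{m-1}$ (the component preceding a diamond, genuine or fake, is one-dimensional because no two consecutive components are diamonds). Using the relations defining the type $\mu$ of $L_m$, and distinguishing the genuine case from the fake cases $\mu=0,1$ via Definitions~\ref{def:type} and~\ref{def:type-fake}, I would produce an explicit generator $u$ of $L_{m+1}$ of the shape $[wxy]$ or $[wyx]$ (the two being proportional through the type relation). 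Since $(\ad y)^2=0$ gives $[wxyy]=0$, while the identity $0=[w'[xyy]]=-2[w'yxy]$ gives $[w'yxy]=0$ for every $w'$, in each case one obtains $[uy]=0$, so $y$ already centralizes $L_{m+1}$.

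Next I would set up the induction. Assuming $y$ centralizes $L_{m+1},\dots,L_{m+i}$, the covering property forces these components to be one-dimensional and spanned by $u,[ux],\dots,[ux^{i-1}]$, so the whole assertion reduces to
\[
[ux^jy]=0 \qquad \text{for } 0\le j\le q-4.
\]
Writing the brackets with $x$ and $y$ as operators and using $[\ad x,\ad y]=\ad[xy]$, one checks that if $[ux^{j-1}y]=0$ then $[ux^jy]=[ux^{j-1}[xy]]$; thus the inductive step is exactly the vanishing of the action of $[xy]\in L_2$ on the chain element $[ux^{j-1}]$. To evaluate $[ux^{j-1}[xy]]$ I would expand it by the generalized Jacobi identity, evaluating the binomial coefficients modulo $p$ by Lucas' theorem, so as to trade it against brackets $[u\,x^k\,y\,x^{q-1-k}]$ and, crucially, against terms containing $x^q$ which vanish because $(\ad x)^q=0$. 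The essential point is that $(\ad x)^q=0$ closes the resulting system of relations among the quantities $[ux^jy]$ before the index reaches $q$, pinning each of them to zero; by contrast $(\ad y)^2=0$ and the derived operator relations $\ad[xy]\,\ad y=\ad y\,\ad[xy]=0$ are by themselves insufficient, as one already sees from $\ad y\,(\ad x)^2\,\ad y\neq 0$ in general.

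Finally I would treat the cases $\mu=-1$ and $\mu=0$ separately, where the bare induction does not close and the extra hypotheses are consumed. In the fake case $\mu=0$ the component $L_m$ is merely one-dimensional (here $[wx]=0$ forces $u=[wyx]$, which is not of the form $(\ad y)(\cdot)$), so its own relations degenerate and one must reach back to the diamond $L_{m-q+1}$: writing $w$, and hence $u$, as $(\ad x)$-iterates of a generator $z$ attached to $L_{m-q+1}$, the Jacobi expansion produces correction terms that can only be evaluated through the type-$\lambda$ relations of that diamond, and the hypothesis $\lambda\neq 0$ is precisely what prevents a vanishing denominator and forces the desired cancellation. The case $\mu=-1$ is degenerate for an analogous reason, the value $-1$ being the point at which a coefficient in the recursion drops out, so that the closure again requires the type-$\lambda$ data of the preceding diamond. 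I expect this straddling of the chain across the previous diamond to be the main obstacle: the argument must be organized so that two applications of $(\ad x)^q=0$ — one anchored at $L_m$ through the relation $[wxx]=0$, one anchored at $L_{m-q+1}$ through $z$ — combine with the $\lambda$- and $\mu$-relations to yield $[ux^jy]=0$ throughout $0\le j\le q-4$, which is the assertion that $y$ centralizes $L_{m+1},\dots,L_{m+q-3}$.
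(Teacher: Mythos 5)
Your overall architecture (reduce to the vanishing of a chain of elements $[ux^jy]$ for $0\le j\le q-4$ emanating from $L_{m+1}$, argue by induction on the degree, and treat $\mu=-1,0$ separately by reaching back to the type-$\lambda$ relations of $L_{m-q+1}$) matches the paper's, and your opening observation that $y$ centralizes $L_{m+1}$ is correct. But the inductive step --- the entire substance of the theorem --- is not actually carried out. Rewriting $[ux^jy]$ as $[ux^{j-1}[xy]]$ is trivial bookkeeping; the claim that one can then ``expand it by the generalized Jacobi identity \dots so as to trade it against brackets $[ux^kyx^{q-1-k}]$ and \dots terms containing $x^q$'' does not describe a working mechanism, since $[a[bc]]$ expands into only two terms and no relation of $L$ has yet been invoked. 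What the paper actually does (Proposition~\ref{prop:chain_genuine}) is apply the whole family of relations $[yx^jy]=0$, $h<j\le q-3$, to elements $u'$ chosen in degrees $m-1-j+h$ \emph{below} the diamond --- this is the only place the hypothesis that $y$ centralizes $L_{m-q+2},\dots,L_{m-2}$ enters the generic case, and your sketch never uses that hypothesis in the main induction. Each such relation yields a single binomial-coefficient multiple of the target element, and one then needs a genuine number-theoretic input (Lemma~\ref{lemma:binomial}, via the symmetry Lemma~\ref{lemma:invert}) to guarantee that at least one coefficient is a unit modulo $p$.

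That lemma is exactly where your assertion that ``$(\ad x)^q=0$ closes the resulting system \dots pinning each of them to zero'' fails: the system does \emph{not} close when $q-1-h$ is a power of $p$, when $\mu=1$ and $q-h$ is a power of $p$, or when $\mu=-1$ and $h=q-4$. These exceptional steps require separate ad hoc arguments in the paper: a second expansion $0=[v[yx^hy]]$ whose coefficient is nonzero unless $\mu=2$; for $\mu=2$ an argument showing the target element is merely central and hence zero by the covering property; and for $\mu=1$ a computation anchored at $[v_1yx]=-2[v_1xy]$. None of this is anticipated in your proposal, and your identification of the problematic types is incomplete ($\mu=1,2$ also need special handling inside the ``generic'' case, not only $\mu=-1,0$). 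So while the skeleton is right, the proposal is missing the key idea --- ranging over a family of relations and controlling the resulting binomial coefficients mod $p$ --- together with all of the exceptional cases that this analysis forces.
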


When $L_m$ has type $-1$, Theorem~\ref{thm:chain_recap}
requires information on the previous diamond,
namely, that it occurs as $L_{m-q+1}$.
One indication of why this setup is more delicate
is that it occurs when $L$ is the graded Lie algebra
associated to the lower central series of the Nottingham group,
or a generalization studied in~\cite{Car:Nottingham},
where complications arise because of Zassenhaus algebras
having one particular central extension.

When $L_m$ has type $0$, Theorem~\ref{thm:chain_recap} excludes
the case where $L_{m-q+1}$ has type $0$ as well.
However, that case is essentially covered,
if needed in applications, by reinterpreting $L_{m-1}$
and $L_{m-q}$ having both type $1$.

Now we combine Theorem~\ref{thm:chain_recap} and
Corollary~\ref{cor:next_diamond} to provide an inductive proof
of Theorem~\ref{thm:distance_intro} of the Introduction,
by proving a more detailed statement first.

\begin{theorem}\label{thm:distance}
Let $L$ be a Nottingham algebra with second diamond $L_q$ and standard generators $x$ and $y$.
Suppose $[L_{m-2}y]=0$ and $[L_{m-1}y]\neq 0$,
for some $m\ge q$.
\begin{enumerate}
\item[(a)]
If $\dim(L_m)=2$, then $y$ centralizes
each homogeneous component from $L_{m+1}$ up to $L_{m+q-3}$, and $L_{m+q-1}$ is a diamond with a type.
\item[(b)]
If $\dim(L_m)=1$,
then either $y$ centralizes
each homogeneous component from $L_m$ up to $L_{m+q-3}$, and $L_{m+q-1}$ is a diamond with a type,
or $y$ centralizes
each homogeneous component from $L_m$ up to $L_{m+q-4}$, and $L_{m+q-2}$ is a diamond with a type.
\end{enumerate}
\end{theorem}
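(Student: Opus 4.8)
The plan is to prove Theorem~\ref{thm:distance} by strong induction on $m$, with Theorem~\ref{thm:chain_recap} serving as the engine that transports the pattern of $y$-centralized components from before a diamond to after it, and with Corollary~\ref{cor:next_diamond} locating the next diamond and certifying that it has a type. The standing hypotheses $[L_{m-2}y]=0$ and $[L_{m-1}y]\neq 0$, through the argument already recorded for Assertion~(a) of Theorem~\ref{thm:distance_intro}, make $L_{m-2}$ and $L_{m-1}$ one-dimensional, so the only freedom left is $\dim(L_m)\in\{1,2\}$. I would take $m=q$ as the base case: there $L_m=L_q$ is the genuine second diamond, of type $-1$; the centralization of $L_{q+1},\dots,L_{2q-3}$ is the input quoted from~\cite{CaMa:Nottingham,Young:thesis} at the start of this section, and Corollary~\ref{cor:next_diamond}, applicable because $-1\neq 1$, gives that $L_{2q-1}$ is a diamond with a type. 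For every larger admissible index one has $m\geq 2q-1$, so the constraint $m\geq 2q-1$ of Theorem~\ref{thm:chain_recap} is in force from the first inductive step onward.

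For the inductive step I would feed in the theorem at the previous region, namely at the largest index $m_0<m$ satisfying the same two hypotheses. Its conclusion supplies exactly what Theorem~\ref{thm:chain_recap} requires at $L_m$: that $y$ centralizes the block $L_{m-q+2},\dots,L_{m-2}$, that $L_m$ itself is a diamond carrying a type $\mu$, and, when $\mu\in\{-1,0\}$, that the preceding diamond sits at $L_{m-q+1}$ with a type $\lambda$ (and $\lambda\neq 0$ when $\mu=0$). In case~(a), where $\dim(L_m)=2$, the diamond is genuine and hence $\mu\notin\{0,1\}$; I would apply Theorem~\ref{thm:chain_recap} to obtain that $y$ centralizes $L_{m+1},\dots,L_{m+q-3}$, and then Corollary~\ref{cor:next_diamond}, again with $\mu\neq 1$, to conclude that $L_{m+q-1}$ is a diamond with a type. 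This is precisely Assertion~(a).

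In case~(b), where $\dim(L_m)=1$, the inclusion $[L_{m-1}y]\subseteq L_m$ forces $L_m=[L_{m-1}y]$, whence $[L_m y]=[L_{m-1}yy]=0$ by $(\ad y)^2=0$; thus $y$ centralizes $L_m$ and, once the anomalous configuration discussed below is excluded, $L_m$ is a fake diamond of type $0$. Which of the two outcomes occurs is governed by the type $\lambda$ of the preceding diamond at $L_{m-q+1}$. If $\lambda\neq 0$, I would run Theorem~\ref{thm:chain_recap} with $\mu=0$ at index $m$ followed by Corollary~\ref{cor:next_diamond}; combined with $[L_m y]=0$ this gives that $y$ centralizes $L_m,\dots,L_{m+q-3}$ and that $L_{m+q-1}$ is a diamond with a type, the first outcome. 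If $\lambda=0$, the $\mu=0$ branch of the engine is unavailable, and following the remark after Theorem~\ref{thm:chain_recap} I would instead read $L_{m-1}$ as a fake diamond of type $1$ and run the engine and the corollary at index $m-1$ in their $\mu=1$ form; this shifts all indices down by one and leads to $y$ centralizing $L_m,\dots,L_{m+q-4}$ and to $L_{m+q-2}$ being a diamond with a type, the second outcome. In each branch the reading of the current fake diamond is chosen so that the spacing to the next diamond comes out as exactly $q-1$.

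The propagation of centralization is delegated to Theorem~\ref{thm:chain_recap}, so the real difficulty in the present argument is certifying that its hypotheses genuinely hold at each step, and above all that $L_m$ is a diamond with a type at all. The critical configuration to rule out is a one-dimensional $L_m$ with $[L_{m-1}x]=L_m\neq 0$ alongside $[L_{m-1}y]=L_m\neq 0$: such an $L_m$ admits no type and the engine cannot be started. This cannot be settled from the data at $m$ alone. Its exclusion rests on the operator relation $(\ad x)^q=0$ pushed along the block of $y$-centralized components, for then $L_{m+q-2}=[L_{m-2}x^q]=0$ would follow, contradicting the infinite-dimensionality of $L$, exactly as in the contradiction exploited in the proof of Proposition~\ref{prop:centr_by_y}. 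Since that block of centralized components is guaranteed only by way of the type of the previous diamond, this is the step that forces the whole scheme to be inductive, with Corollary~\ref{cor:next_diamond} carrying the needed information forward from each diamond to the next.
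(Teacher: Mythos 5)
Your overall architecture --- induction on $m$ with base case $m=q$, Theorem~\ref{thm:chain_recap} as the propagation engine, and Corollary~\ref{cor:next_diamond} to locate and certify the next diamond --- is exactly the paper's, and your treatment of case~(a), as well as your observation that the anomalous configuration $[L_{m-1}x]=[L_{m-1}y]=L_m$ must be excluded inductively (the paper does this by reading ``is a diamond with a type'' directly off the inductive conclusion at the previous step, rather than by re-running the $(\ad x)^q$ argument, whose forward-looking hypotheses are not yet available), is essentially sound. The gap lies in your case analysis for~(b). First, the preceding diamond need not sit at $L_{m-q+1}$: the inductive conclusion at the previous region can place it at $L_{m-q}$, namely when the previous step produces a fake diamond of type $1$ at $L_{m-1}$, and in particular in the extremal situation of Proposition~\ref{prop:centr_by_y}(b), where $y$ centralizes the full block $L_{m-q},\dots,L_{m-2}$ of $q-1$ components and $L_{m-q+1}$ is not a diamond at all. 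Your parameter $\lambda$ is then undefined and your branching criterion cannot be evaluated; the paper instead splits on where exactly the first non-centralized component after the previous diamond occurs (three subcases when $\dim(L_r)=1$, two when $\dim(L_r)=2$).

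Second, and more seriously, the choice between the two alternative outcomes in~(b) is not governed by $\lambda$. Whenever the engine must be run at $L_{m-1}$ in its $\mu=1$ form (because $\lambda=0$, or because the preceding diamond is displaced to $L_{m-q}$), it delivers only the centralization of $L_m,\dots,L_{m+q-4}$, and at that point \emph{both} outcomes remain possible: if $y$ does not centralize $L_{m+q-3}$, then Corollary~\ref{cor:next_diamond} applied to $L_{m-1}$ of type $1$ --- which requires precisely that non-centralization as an extra hypothesis --- yields a diamond with a type at $L_{m+q-2}$; if $y$ does centralize $L_{m+q-3}$, the corollary must instead be applied to $L_m$ viewed as a diamond of type $0$, yielding a diamond with a type at $L_{m+q-1}$. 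Your $\lambda=0$ branch asserts only the second outcome and silently invokes the $\mu=1$ case of Corollary~\ref{cor:next_diamond} without checking its additional hypothesis. This final dichotomy is decided by a datum (whether $y$ centralizes $L_{m+q-3}$) that cannot be read off the type of the preceding diamond, and supplying that case split is the step your argument is missing.
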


The alternate conclusions in case~(b) of Theorem~\ref{thm:distance}
are due to the fake diamond being interpreted in two ways,
either as $L_m$ of type $0$ or as $L_{m-1}$ of type $1$.

\begin{proof}
When $m=q$ we are in case~(a), and the desired conclusions are known from~\cite{CaMa:Nottingham,Young:thesis},
as we recalled at the beginning of this section.

Proceeding by induction on $m$, we may suppose $m>q$ and assume that
the statement holds for all lower values of $m$.
In particular, this implies that $L_m$ is a diamond with
a type, different from $1$.
Let $L_r$ the latest component before $L_{m-1}$
(and hence before $L_{m-2}$)
which is not centralized by $y$.

Suppose first $\dim(L_r)=2$.
Then $[L_{r-1}y]\neq 0$, and $[L_{r-2}y]=0$,
otherwise $[L_{r-2}y]=L_{r-1}$ because the latter is
one-dimensional, and then $[L_{r-2}yy]\neq 0$,
contradicting $(\ad y)^2=0$.
Thus, by induction $L_r$ has a type, $y$ centralizes
each homogeneous component from $L_{r+1}$ up to $L_{r+q-3}$, and $L_{r+q-1}$ is a diamond with a type.
If the latter type is different from $1$, then
$L_{r+q-1}=L_m$, otherwise $L_{r+q-1}=L_{m-1}$.

In the former case,
according to Theorem~\ref{thm:chain_recap},
$y$ centralizes each homogeneous component
from $L_{m+1}$ up to $L_{m+q-3}$.
According to Corollary~\ref{cor:next_diamond} then
$L_{m+q-1}$ is a diamond with a type.

In the latter case, where $L_{r+q-1}=L_{m-1}$
is a diamond of type $1$, Theorem~\ref{thm:chain_recap}
tells us that $y$ centralizes each homogeneous component
from $L_{m}$ up to $L_{m+q-4}$.
If $y$ does not centralize $L_{m+q-3}$, then
Corollary~\ref{cor:next_diamond} applies to $L_{m-1}$
instead of $L_m$, and yields that
$L_{m+q-2}$ is a diamond with a type.
If $y$ centralizes $L_{m+q-3}$, then
Corollary~\ref{cor:next_diamond} applies to $L_{r+q}=L_m$
viewed as a diamond of type $0$, and yields that
$L_{m+q-1}$ is a diamond with a type.

Suppose now $\dim(L_r)=1$.
Note that $[L_{r-1}y]=0$, because $(\ad y)^2=0$.
Furthermore, $[L_{r+1}y]=0$ by our choice of $r$,
and hence $\dim(L_{r+1})=1$.
Note that by induction $L_r$ is a diamond of type $1$.
Also, according to our induction hypothesis (with $r+1$
playing the role of $m$),
$y$ centralizes each homogeneous component
from $L_{r+1}$ up to at least $L_{r+q-3}$.
Now we distinguish three cases, depending on the earliest
homogeneous component past $L_{r+q-3}$ which is not centralized by
$y$, after noting that $y$ cannot centralize all of
$L_{r+q-2}$, $L_{r+q-1}$ and $L_{r+q}$
because of statement (a) of Proposition~\ref{prop:centr_by_y}.

If $y$ does not centralize $L_{r+q-2}$, then
$L_{r+q-1}=L_m$, by our choice of $r$.
We apply Theorem~\ref{thm:chain_recap} to the diamond $L_m$.
Then the hypothesis of Theorem~\ref{thm:chain_recap}
asking that
$y$ centralizes each homogeneous component
from $L_{m-q+2}$ up to $L_{m-2}$ holds,
and hence so does its conclusion that
$y$ centralizes each homogeneous component
from $L_{m+1}$ up to $L_{m+q-3}$.
(Note that this argument remains valid also in the exceptional
cases of Theorem~\ref{thm:chain_recap} where $L_m$
has type $-1$ or $0$, because $L_{m-q+1}=L_r$ is a diamond
of nonzero type.)
According to Corollary~\ref{cor:next_diamond},
the component $L_{m+q-1}$ is a diamond with a type.

Now suppose $y$ centralizes $L_{r+q-2}$ as well,
but not $L_{r+q-1}$.
Then $L_{r+q}=L_m$ by our choice of $r$.
Thus, $L_{m-q+1}=L_{r+1}$ is a diamond of type $0$
and $y$ centralizes each homogeneous component
from $L_{m-q+2}$ up to $L_{m-2}$.
If $L_m$ does not have type $0$, then
according to Theorem~\ref{thm:chain_recap},
$y$ centralizes each homogeneous component
from $L_{m+1}$ up to $L_{m+q-3}$.
According to Corollary~\ref{cor:next_diamond} then
$L_{m+q-1}$ is a diamond with a type.
If $L_m$ has type $0$, then viewing $L_{m-1}$
as a diamond of type $1$ we may apply
Theorem~\ref{thm:chain_recap}
to it (rather than to $L_m$) and conclude that
$y$ centralizes each homogeneous component
from $L_{m}$ up to $L_{m+q-4}$.
Now if $y$ centralizes $L_{m+q-3}$, then
Corollary~\ref{cor:next_diamond} applies to $L_m$,
a diamond of type $0$, and yields that
$L_{m+q-1}$ is a diamond with a type.
If, instead, $y$ does not centralize $L_{m+q-3}$, then
Corollary~\ref{cor:next_diamond} applies to $L_{m-1}$,
a diamond of type $1$, and yields that
$L_{m+q-2}$ is a diamond with a type.

Finally suppose $y$ centralizes $L_{r+q-2}$
and $L_{r+q-1}$, whence $L_{r+q+1}=L_m$.
Thus, $y$ centralizes each homogeneous component
from $L_{m-q}$ up to $L_{m-2}$.
This is the extremal situation considered in
Assertion (b) of Proposition \ref{prop:centr_by_y},
where $y$ centralizes the highest possible number
of consecutive homogeneous components, and we conclude that
$L_{m-1}$ must be a diamond of type $1$.
Applying Theorem~\ref{thm:chain_recap}
to $L_{m-1}$ instead of $L_m$, we find that
$y$ centralizes each homogeneous component
from $L_m$ up to $L_{m+q-4}$.
If $y$ does not centralize $L_{m+q-3}$, then
Corollary~\ref{cor:next_diamond} applies to $L_{m-1}$
instead of $L_m$, and yields that
$L_{m+q-2}$ is a diamond with a type.
If $y$ centralizes $L_{m+q-3}$, then
Corollary~\ref{cor:next_diamond} applies to $L_m$
viewed as a diamond of type $0$, and yields that
$L_{m+q-1}$ is a diamond with a type.
\end{proof}

\begin{proof}[Proof of Theorem~\ref{thm:distance_intro}]
We proved Assertion~(a) in the main text near the beginning of this section.

As we recalled early in
Section~\ref{sec:types},
the earliest homogeneous component past
$L_1$ which is not centralized by $y$
is $L_{q-1}$, and we also know
that the second diamond $L_q$
has a type,
and in fact equal to $-1$.
Hence in proving Assertions~(b) and~(c)
we may assume $m>q$.

To prove Assertion~(b), note first that if $[L_{m-1}y]\neq 0$
and $[L_{m}y]\neq 0$,
then $L_m$ cannot have dimension one,
otherwise $[L_{m-1}y]=L_m$, and $[L_{m-1}yy]=0$
would yield a contradiction.
Now let $L_{m'}$ the latest homogeneous component of $L$, before $L_{m-1}$, which is not centralized by $y$.
If $\dim(L_{m'})=2$, then case~(a) of Theorem~\ref{thm:distance},
applied to $L_{m'}$ rather than $L_m$,
yields $m'+q-1=m$,
and that $L_m$ is a diamond with a type.
If $\dim(L_{m'})=1$, then case~(b) of Theorem~\ref{thm:distance}
applied to $L_{m'+1}$ yields
$m'+q=m$ or $m'+q-1=m$,
and that $L_m$ is a diamond
with a type in either case.
This proves Assertion~(b).

To prove Assertion~(c), assume $[L_{m-2}y]=0$,
$[L_{m-1}y]\neq 0$
and $[L_{m}y]=0$.
In particular,
here $\dim(L_m)=1$, because $y$ cannot
centralize a two-dimensional component,
as shown earlier in this section.
Let $L_{m'}$ the latest homogeneous component of $L$,
before $L_{m-1}$, which is not centralized by $y$.
If $\dim(L_{m'})=2$, then case~(a) of Theorem~\ref{thm:distance}
applied to $L_{m'}$ shows that either $m'+q-1=m$
and $L_m$ is a diamond with a type different from $1$,
which here necessarily equals $0$, or $m'+q-1=m-1$
and $L_{m-1}$ is a diamond of type $1$.
If $\dim(L_{m'})=1$, then case~(b) of Theorem~\ref{thm:distance}
applied to $L_{m'+1}$ shows that either
$m\in\{m'+q,m'+q-1\}$
and $L_m$ is a diamond with a type different from $1$,
which here necessarily equals $0$, or
$m-1\in\{m'+q,m'+q-1\}$ and $L_{m-1}$ has type $1$.
This proves Assertion~(c).

We now explain how Assertion~(d) also follows from our arguments.

In our proof of Assertion~(b),
in case $\dim(L_{m'})=2$
we showed that
$L_m=L_{m'+q-1}$ is a diamond with a type.
In case $\dim(L_{m'})=1$,
according to Assertion~(c) the
homogeneous component
$L_{m'}$ is a diamond of type $1$,
which may equivalently be
interpreted as $L_{m'+1}$
being a diamond of type $0$.
Then we have proved that
either $L_{m'+q-1}$ or $L_{m'+q}$
is a diamond with a type,
and with the appropriate
interpretation this has degree
$q-1$ higher than the previous diamond.

In our proof of Assertion~(c),
in case $\dim(L_{m'})=2$
we showed that either
$L_m=L_{m'+q-1}$ is a diamond of type $0$,
or $L_{m-1}=L_{m'+q-1}$ is a diamond of type $1$.
Both cases fit the desired conclusion on diamond distance.
In case $\dim(L_{m'})=1$,
it is similarly easy to check that
each of the four possibilities may be suitably
interpreted so that the two consecutive diamonds under
consideration have a difference of $q-1$ in degrees.
\end{proof}

\begin{rem}\label{rem:finite-dim}
The results stated in this section
remain true without our blanket assumption that
$L$ has infinite dimension,
if properly interpreted,
because all our arguments effectively take place in a sufficiently large
finite-dimensional quotient of $L$.
% In particular, inspection of our proofs show that they
In particular, the conclusions of Theorem~\ref{thm:distance_intro},
on diamonds having a type and on their distances,
remain true for a finite dimensional Nottingham algebra $L$ as long as they do not involve the last
nonzero homogeneous component of $L$, or the preceding one.
\end{rem}

\section{Proof of Theorem~\ref{thm:chain_recap}}\label{sec:chain_proof}

The special case of Theorem~\ref{thm:chain_recap} where $L_{m}$ is a diamond of finite type $\mu$
was, in essence, already known from~\cite[Section~7]{CaMa:Nottingham},
although in a less general setting.
Special considerations were needed there when $\mu=-1,2$.

With Theorem~\ref{thm:chain_recap} we extend the results
of~\cite[Section~7]{CaMa:Nottingham} to the case where $L_{m}$
is a diamond of infinite type.
Furthermore, here we follow a different approach,
where we exploit a natural collection of relations from the start,
while the proofs in~\cite{CaMa:Nottingham}
used a carefully chosen relation at each step,
involving the use of Lucas' theorem to show that a particular
coefficient is nonzero.
Although that approach has its advantages, our present approach
is more systematic and translates the Lie-theoretic deductions
into a simple number-theoretic condition whose implications
appear more transparent, as in Lemma~\ref{lemma:binomial} below.
Our approach includes all cases already covered
in~\cite[Section~7]{CaMa:Nottingham}, and provides
an alternate proof of them in a general setting.

We will need a couple of results on binomial coefficients modulo a prime.
The first result expresses a general symmetry property of
binomial coefficients modulo $p$.

\begin{lemma}\label{lemma:invert}
If $q$ is a power of the prime $p$ we have
\begin{equation*}\label{eq:binomial}
(-1)^{a}\binom{a}{b}\equiv
(-1)^{b}\binom{q-1-b}{q-1-a}
\pmod{p}
\quad\textrm{for $0\le b\le a<q$.}
\end{equation*}
\end{lemma}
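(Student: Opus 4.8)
The plan is to pass to generating functions over $\F_p$ and read off both sides of the congruence as coefficients of a single polynomial. First I would recast the claim in a more convenient symmetric form. Since $q-1-a=(q-1-b)-(a-b)$ we have $\binom{q-1-b}{q-1-a}=\binom{q-1-b}{a-b}$, and since $(-1)^{a}=(-1)^{a-b}(-1)^{b}$ as integers, multiplying the asserted congruence by $(-1)^{b}$ (and using $(-1)^{2b}=1$) shows it is equivalent to
\[
(-1)^{a-b}\binom{a}{b}\equiv \binom{q-1-b}{a-b}\pmod{p}.
\]
This is the identity I would actually establish, and the hypothesis $0\le b\le a<q$ now reads $0\le a-b<q$ and $0\le q-1-b<q$, so every binomial coefficient in sight is genuine.

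Next I would work in the power series ring $\F_p[[t]]$ and exploit the Frobenius identity $(1+t)^q=1+t^q$, valid in characteristic $p$. Writing $q-1-b=q-(b+1)$, this gives
\[
(1+t)^{q-1-b}=(1+t)^{q}\,(1+t)^{-(b+1)}=(1+t^q)\,(1+t)^{-(b+1)}.
\]
The coefficient of $t^{a-b}$ on the left is $\binom{q-1-b}{a-b}$ reduced modulo $p$. On the right, because $a-b<q$, the term $t^q(1+t)^{-(b+1)}$ contributes only in degrees $\ge q$, so the coefficient of $t^{a-b}$ equals the coefficient of $t^{a-b}$ in $(1+t)^{-(b+1)}$ alone. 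Expanding the negative binomial series $(1+t)^{-(b+1)}=\sum_{k\ge 0}(-1)^{k}\binom{b+k}{k}t^{k}$ and setting $k=a-b$ yields coefficient $(-1)^{a-b}\binom{a}{a-b}=(-1)^{a-b}\binom{a}{b}$. Comparing the two expressions for the coefficient of $t^{a-b}$ gives the reduced congruence, and hence the lemma.

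This argument is short and essentially computational, so I do not expect a genuine obstacle; the one point requiring care is the bookkeeping on ranges. Specifically, the condition $a-b<q$ is exactly what guarantees that the $t^q$ factor coming from Frobenius is harmless in the relevant degree, and the conditions $0\le b$ and $a<q$ ensure that both $\binom{q-1-b}{a-b}$ and the negative-binomial coefficient are the objects I claim. I would therefore state the range hypotheses explicitly at the point where they are used, and verify the elementary identity $\binom{-(b+1)}{k}=(-1)^{k}\binom{b+k}{k}$ once, treating it as the only non-mechanical ingredient.
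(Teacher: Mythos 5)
Your proof is correct, and it takes a genuinely different route from the paper. The paper's own proof is essentially a sketch: it handles only the special case $q=p$ via Wilson's theorem (writing both binomial coefficients in terms of factorials and using $j!\,(p-1-j)!\equiv(-1)^{j+1}\pmod p$), and for general prime powers $q$ it defers to two external references. Your argument, by contrast, is uniform in $q$ and self-contained: the reformulation $(-1)^{a-b}\binom{a}{b}\equiv\binom{q-1-b}{a-b}$ is the correct symmetric form, the identity $(1+t)^{q-1-b}=(1+t^q)(1+t)^{-(b+1)}$ in $\F_p[[t]]$ is valid since $1+t$ is a unit and Frobenius gives $(1+t)^q=1+t^q$, the condition $a-b<q$ does exactly the work you say it does in discarding the $t^q$ term, and the negative binomial expansion $\binom{-(b+1)}{k}=(-1)^k\binom{b+k}{k}$ (an identity over $\Z$, hence reducible mod $p$) closes the computation. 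What your approach buys is a one-page proof covering all prime powers at once, at the cost of invoking formal power series; what the paper's approach buys is brevity in the text itself, at the cost of leaving the general case to the literature. Both are sound; yours is arguably the more satisfying proof to include if self-containment is valued.
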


\begin{proof}
The special case where $q=p$ follows from Wilson's theorem $(p-1)!\equiv -1\pmod{p}$ after writing
$\binom{a}{b}$ as $a!/\bigl(b!(a-b)!\bigr)$,
and similarly with the other binomial coefficient.
The general case might be less well known, but
two different proofs can be found
in~\cite[Section~4]{Mat:binomial}
and~\cite[Section~2]{Mat:chain_lengths}.
\end{proof}

Our second result on binomial coefficients is tailored to
our application in this paper, as its hypothesis will come up
naturally in the proof of Theorem~\ref{thm:chain_recap}.
That hypothesis asserts that pairs of consecutive entries
of Pascal's triangle's row expressing the coefficients
of $(1+x)^n$, excluding the first and last entries,
stand in a constant proportionality relation
when viewed  modulo a prime $p$.
Two such instances occur, in particular, when
$n$ is a power of $p$, in which case $(1+x)^n=1+x^n$
in $\F_p[x]$,
% (hence all intermediate coefficients vanish modulo $p$),
and when $n+1$ is a power of $p$, in which case
$(1+x)^n=(1+x^{n+1})/(1+x)=\sum_{j=0}^{n+1}(-1)^jx^j$
in $\F_p[x]$.
% (hence all coefficients take alternating values $\pm 1$ modulo $p$).
These examples show that the conclusion of
our Lemma~\ref{lemma:binomial} is best possible.
This result admits a number of variations
(see \cite{Mat:chain_lengths,IMS,Ugo:type_n} for some
in similar contexts as this paper), all aiming
to show that a positive integer $n$ is closely related
to a power of $p$
provided that enough coefficients of $(1+x)^n$
satisfy certain conditions.
The simplest instance is the following well-known and elementary, fact, which we will use repeatedly in the proof of
Lemma~\ref{lemma:binomial}:
if $\binom{n}{j}$ is a multiple of $p$ for $0<j<n$,
where $n$ is a positive integer, then $n$ is a power of $p$.
This can be shown, for example,
by an application of Lucas' theorem.
The prime $p$ in the following result is arbitrary,
we will only need it for $p>3$ in our proof of Theorem~\ref{thm:chain_recap}.

\begin{lemma}\label{lemma:binomial}
Let $q$ be a power of a prime $p$, let $n$ and $a$ be integers with $n>2$,
and suppose
\[
\binom{n}{j-1}+a\binom{n}{j}
\equiv 0\pmod{p}
\qquad\text{for $1<j<n$}.
\]
Then either $n$ is a power of $p$, or $a\equiv 1\pmod{p}$ and $n+1$ is a power of $p$, or $a\equiv -1\pmod{p}$ and $n=3$.
\end{lemma}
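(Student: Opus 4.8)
The plan is to read the hypothesis as a recurrence on the interior entries $c_j:=\binom{n}{j}$ of the $n$-th row of Pascal's triangle, namely $c_{j-1}+a\,c_j\equiv 0\pmod p$ for $1<j<n$, and then to split into cases according to whether $a$ and $n$ are divisible by $p$. First I would dispose of the two \emph{degenerate} cases. If $a\equiv 0\pmod p$, the hypothesis reads $c_{j-1}\equiv 0$ for $1<j<n$, so $\binom{n}{j}\equiv 0$ for $1\le j\le n-2$, and by the symmetry $\binom{n}{n-1}=\binom{n}{1}$ these vanishings extend to all $0<j<n$; the elementary fact recalled just before the lemma then gives that $n$ is a power of $p$. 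If instead $p\mid n$ while $a\not\equiv 0$, then $c_1=n\equiv 0$ forces, through the recurrence (now solvable for $c_j$ since $a$ is invertible), $c_j\equiv 0$ for all $1\le j\le n-1$, and again $n$ is a power of $p$. Both of these land in the first alternative.

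So the main case is $a\not\equiv 0$ and $p\nmid n$. Here the recurrence is $c_j\equiv r\,c_{j-1}$ with $r:=-a^{-1}\not\equiv 0$, which I would solve to obtain $\binom{n}{j}\equiv r^{\,j-1}n\pmod p$ for $1\le j\le n-1$ (using $c_1=\binom{n}{1}=n$). The key additional input is the integer identity $(j+1)\binom{n}{j+1}=(n-j)\binom{n}{j}$. Substituting the closed form and cancelling the invertible factor $r^{\,j-1}n$ converts it into the purely numerical condition $(j+1)\,r\equiv n-j\pmod p$, valid for $1\le j\le n-2$.

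From here I would separate the base value $n=3$ from the range $n\ge 4$. For $n=3$ it is cleanest to use the single available instance of the hypothesis directly, namely $\binom{3}{1}+a\binom{3}{2}=3(1+a)\equiv 0$, which forces $a\equiv -1$ because $p\nmid n=3$; this is the third alternative. For $n\ge 4$ I can use $j=1$ and $j=2$ in the condition $(j+1)\,r\equiv n-j$, whose difference yields $r\equiv -1$, whence $a\equiv 1$, and then $2r\equiv n-1$ gives $n\equiv -1\pmod p$.

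Finally, for $n\ge 4$ I would \emph{bootstrap} to the conclusion that $n+1$ is a power of $p$. With $r\equiv -1$ and $n\equiv -1$ the closed form collapses to $\binom{n}{j}\equiv(-1)^j\pmod p$ for $0\le j\le n-1$ (the endpoint $j=0$ being trivial). Pascal's rule $\binom{n+1}{j}=\binom{n}{j}+\binom{n}{j-1}$ then yields $\binom{n+1}{j}\equiv 0$ for $1\le j\le n-1$, while $\binom{n+1}{n}=\binom{n+1}{1}=n+1\equiv 0$ by $n\equiv -1$; hence $\binom{n+1}{j}\equiv 0$ for every $0<j<n+1$, and the same elementary fact forces $n+1$ to be a power of $p$, giving the second alternative with $a\equiv 1$. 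I expect this final bootstrapping step, together with the bookkeeping needed to treat the endpoint $j=n$ and to justify the cancellation of $r^{\,j-1}n$ in the numerical identity, to be the only delicate point; the remainder is a routine case split.
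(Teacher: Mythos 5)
Your proof is correct, but it takes a genuinely different route from the paper's. The paper packages the hypothesis into the single polynomial identity $(x+a)(x+1)^n=x^{n+1}+(a+n)x^n+(an+1)x+a$ in $\F_p[x]$ and extracts its constraints by substituting $x=-1$ and reading off the coefficients of $x^2$ and $x^3$, with a case split on the parity of $n$; you instead solve the first-order recurrence $\binom{n}{j}\equiv r\binom{n}{j-1}$ (with $r=-a^{-1}$) in closed form as $\binom{n}{j}\equiv r^{j-1}n$ and feed that into the absorption identity $(j+1)\binom{n}{j+1}=(n-j)\binom{n}{j}$, which linearizes everything into $(j+1)r\equiv n-j$ for $1\le j\le n-2$; two instances of this relation immediately give $r\equiv-1$ and $n\equiv-1\pmod p$ once $n\ge 4$. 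Both arguments reduce in the end to the same elementary fact, that a row of Pascal's triangle with all interior entries divisible by $p$ has length a power of $p$ (your bootstrap via Pascal's rule versus the paper's factorizations $(x+a)(x^n+1)$ and $x^{n+1}+1$). What your route buys is the avoidance of the parity case split and a more mechanical derivation of the two key congruences; what the paper's route buys is compactness, since the evaluation at $x=-1$ produces the factored condition $\bigl(a-(-1)^n\bigr)\bigl(n-1-(-1)^n\bigr)\equiv 0\pmod p$ in one stroke. Your handling of the degenerate cases ($a\equiv 0$ or $p\mid n$), of the endpoint $j=n$ via the symmetry $\binom{n+1}{n}=n+1$, and of the base case $n=3$ is all sound.
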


\begin{proof}
Consider the polynomial $f(x)=(x+a)(x+1)^n\in\F_p[x]$.
Then our hypothesis expresses the vanishing of
the coefficient of $x^j$ in $f(x)$ in the stated range.
Thus, after working out the remaining coefficients explicitly,
our hypothesis is equivalent to the condition
\begin{equation}\label{eq:pol}
(x+a)(x+1)^n=x^{n+1}+(a+n)x^n+(an+1)x+a
\qquad\text{in $\F_p[x]$,}
\end{equation}
which is somehow more convenient to handle.

If $n$ is a multiple of $p$, then Equation~\eqref{eq:pol}
reads
$(x+a)(x+1)^n=x^{n+1}+ax^n+x+a$ in $\F_p[x]$.
Because the right-hand side factorizes as $(x+a)(x^n+1)$,
we get $(x+1)^n=x^n+1$ in $\F_p[x]$, and so as discussed above
$n$ must be a power of $p$,
which is one of the desired conclusions.
Thus, from now on we assume $n$ is not a multiple of $p$.

Because $-1$ is a root of the left-hand side of
Equation~\eqref{eq:pol}, it must also be a root
of its right-hand side, and hence
$0=(-1)^n(a+n-1)-an-1+a$ in $\F_p$,
This is equivalent to
\begin{equation}\label{eq:an_A}
\bigl(a-(-1)^n\bigr)\bigl(n-1-(-1)^n\bigr)\equiv 0\pmod{p}.
\end{equation}
Moreover, according to Equation~\eqref{eq:pol}
the coefficient of $x^2$ in $(x+a)(x+1)^n\in\F_p[x]$ vanishes,
hence we have $a\binom{n}{2}+n\equiv 0\pmod{p}$, which means
\begin{equation}\label{eq:an_B}
a(n-1)+2\equiv 0\pmod{p}.
\end{equation}
We will now use Equations~\eqref{eq:an_A} and~\eqref{eq:an_B}
jointly to reach the desired conclusions.

Suppose first $n$ is odd.
Then Equation~\eqref{eq:an_A}
yields $a\equiv -1\pmod{p}$.
But then Equation~\eqref{eq:an_B} yields $n\equiv 3\pmod{p}$,
whence $p\neq 3$.
If $p=2$ our hypothesis implies
$(x+1)^{n+1}=x^{n+1}+1$ in $\F_2[x]$,
and hence $n+1$ is a power of $2$.
If $p>3$, then the coefficient of $x^3$ in
$(x+a)(x+1)^n$
% $(x+a)(x+1)^n=(x+a)(x^{(n-3)/p}+1)^p(x+1)^3$
equals
$a\binom{n}{3}+\binom{n}{2}\equiv a+3\equiv 2\pmod{p}$.
This contradicts Equation~\eqref{eq:pol} if $n>3$,
and leads to another possible conclusion if $n=3$.

Now suppose $n$ is even.
Then Equation~\eqref{eq:an_A} reads
$(a-1)(n-2)\equiv 0\pmod{p}$.
If $n\not\equiv 2\pmod{p}$, then $a\equiv 1\pmod{p}$.
In turn, Equation~\eqref{eq:an_B} yields $n+1\equiv 0\pmod{p}$.
Then Equation~\eqref{eq:pol} implies
$(x+1)^{n+1}=f(x)=x^{n+1}+1$ in $\F_p[x]$,
and so $n+1$ must be a power of $p$,
which together with $a\equiv 1\pmod{p}$ is the remaining possible conclusion.
If $n\equiv 2\pmod{p}$, whence $p>2$
because we have assumed $n$ prime to $p$, then
%Equation~\eqref{eq:an_B} implies $a\equiv -1\pmod{p}$. Then
the coefficient of $x^3$ in $(x+a)(x+1)^n$ equals
$a\binom{n}{3}+\binom{n}{2}\equiv 1\pmod{p}$,
contradicting Equation~\eqref{eq:pol}.
\end{proof}

After these number-theoretic preliminaries we proceed with the proof proper of Theorem~\ref{thm:chain_recap}.

\begin{prop}\label{prop:chain_genuine}
Let $L$ be a Nottingham algebra with second diamond $L_q$ and standard generators $x$ and $y$.
Let $L_{m}$, with $m\geq 2q-1$, be a diamond of type
%$\mu \in \F\cup \{\infty\}$, for some $m\geq 2q-1$,
$\mu \neq -1,0$.
Suppose $y$ centralizes every homogeneous component from $L_{m-q+3}$ up to $L_{m-2}$.
Then $y$ centralizes each homogeneous component from $L_{m+1}$ up to $L_{m+q-3}$.
\end{prop}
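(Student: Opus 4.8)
The plan is to follow the action of $\ad y$ along the chain of homogeneous components immediately after $L_m$, to encode the obstruction to $y$ centralizing them as a family of binomial congruences, and to invoke Lemma~\ref{lemma:binomial} to rule that obstruction out. Fix $w$ spanning the one-dimensional component $L_{m-1}$. Because $L_m$ is a diamond with a type $\mu\neq-1,0$, we have $[wxx]=0=[wyy]$, so $[wx]$ is annihilated by $\ad x$, while the diamond relation writes $[wxy]$ as a fixed scalar multiple of $[wyx]$ (the scalar being $-1$ in the infinite-type case); I call that scalar the \emph{type parameter}. Setting $e_i=[wyx^i]$, the element $e_0=[wy]$ generates a chain under $\ad x$ that terminates with $e_q=0$ since $(\ad x)^q=0$, and I write $g_i=[e_iy]=[wyx^iy]$. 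The assertion to be proved is exactly that $g_i=0$ for $1\le i\le q-3$.

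The base cases are immediate: $g_0=[wyy]=0$ from $(\ad y)^2=0$, and $g_1=[wyxy]=0$ by expanding $0=[w[xyy]]=[wxyy]-2[wyxy]+[wyyx]$ and using $p>3$. Arguing by contradiction, let $L_{m+k}$ be the earliest component past $L_m$ not centralized by $y$, and suppose $k\le q-3$. Minimality of $k$ together with $g_0=0$ makes $L_{m+1},\dots,L_{m+k}$ one-dimensional, spanned by $e_1,\dots,e_k$, with $[e_ky]\neq0$. The heart of the argument is then to expand a single, carefully chosen element supported on this chain, repeatedly commuting the trailing $\ad y$ past powers of $\ad x$ by means of the generalized Jacobi identity and discarding at each stage the terms killed by $(\ad y)^2=0$ and $(\ad x)^q=0$. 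The binomial coefficients $\binom{n}{i}$ thrown up by the generalized Jacobi identity, evaluated modulo $p$ through Lucas' theorem, should assemble into a single relation of the shape $\binom{n}{j-1}+a\binom{n}{j}\equiv0\pmod p$ holding for all $j$ in the relevant middle range, where $n$ measures the chain length out to the first failure and $a$ is a fixed expression in the type parameter of $L_m$. This is exactly the hypothesis of Lemma~\ref{lemma:binomial}, and using the whole range of $j$ at once is what replaces the step-by-step, coefficient-by-coefficient computation of \cite[Section~7]{CaMa:Nottingham}.

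Lemma~\ref{lemma:binomial} then leaves only three options: $n$ is a power of $p$; or $a\equiv1\pmod p$ and $n+1$ is a power of $p$; or $a\equiv-1\pmod p$ and $n=3$. The last two force $a\equiv\pm1$, which under the type dictionary correspond precisely to the excluded types $\mu=-1$ and $\mu=0$, and so cannot occur here. The first is excluded because the positional hypotheses pin $n$ down --- through the length of the centralized chain preceding $L_m$ and the bound $m\ge2q-1$, together with the hypothesis that $y$ centralizes $L_{m-q+3},\dots,L_{m-2}$ --- to a value lying in a range that contains no admissible power of $p$ once the first failure is assumed at $k\le q-3$. Each branch thus yields a contradiction, so no failure can occur before $L_{m+q-2}$, which is the claim.

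The step I expect to be the main obstacle is the middle expansion: arranging the commutation bookkeeping so that the coefficients emerge in exactly the combination $\binom{n}{j-1}+a\binom{n}{j}$ across the full range $1<j<n$ rather than for isolated $j$, correctly identifying $n$ and the precise dependence of $a$ on the diamond type, and verifying that every auxiliary term produced along the way really is absorbed by $(\ad y)^2=0$ or $(\ad x)^q=0$. Once that relation is in hand, matching the two degenerate outcomes of Lemma~\ref{lemma:binomial} to the excluded types $\mu=-1,0$ and excluding the power-of-$p$ outcome are short consistency checks.
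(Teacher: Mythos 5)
Your overall strategy coincides with the paper's: encode the obstruction to $[L_{m+h}y]=0$ as a family of binomial congruences of the shape $\binom{n}{j-1}+a\binom{n}{j}\equiv 0\pmod p$ and feed it to Lemma~\ref{lemma:binomial}. (In the paper's version one takes $v$ spanning $L_{m-1}$, expands $0=[u[yx^jy]]$ for $u$ with $[ux^{j-h}]=v$, obtains the coefficient $\binom{j}{h}\mu^{-1}-\binom{j+1}{h}$ of $[vxyx^{h-1}y]$, and converts it via Lemma~\ref{lemma:invert} into the hypothesis of Lemma~\ref{lemma:binomial} with $n=q-1-h$ and $a=\mu^{-1}$.) But your endgame is wrong in a way that matters. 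You claim the two degenerate outcomes of Lemma~\ref{lemma:binomial} ``force $a\equiv\pm1$, which \ldots{} correspond precisely to the excluded types $\mu=-1$ and $\mu=0$.'' With $a=\mu^{-1}$, the outcome $a\equiv 1$ corresponds to $\mu=1$, which is \emph{not} excluded by the proposition (indeed the paper deliberately keeps $\mu=1$ available in order to reduce the type-$0$ case to it later), and $\mu=0$ corresponds to no finite value of $a$ at all. So the case $\mu=1$ with $q-h$ a power of $p$ genuinely survives Lemma~\ref{lemma:binomial} and must be disposed of by a separate computation; the paper does this by expanding $[w[v_1yx]]=-2[w[v_1xy]]$ for a suitable $w\in L_{m-p^t}$. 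Your argument simply has no mechanism for these values of $h$.

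The second error is your claim that the outcome ``$n$ is a power of $p$'' is ruled out ``because the positional hypotheses pin $n$ down \ldots{} to a range that contains no admissible power of $p$.'' That is false: with $n=q-1-h$ and $1\le h\le q-3$, every $h$ of the form $q-1-p^s$ with $1<p^s<q$ realizes this outcome, and such $h$ exist whenever $q>p$. The paper handles these $h$ with a different identity, $0=[v[yx^hy]]$, whose coefficient $(1-(-1)^h)(\mu^{-1}-1)-h\equiv 2\mu^{-1}-1$ is nonzero unless $\mu=2$; the residual case $\mu=2$ needs yet another argument (showing $[vyx^hy]$ is central and hence zero by the covering property and infinite-dimensionality). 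None of this is present, or even anticipated, in your sketch. Finally, two smaller gaps: the main expansion only works for $h<j\le q-3$, so the last step $h=q-3$ is not reachable by it and also needs the auxiliary identity; and the central computation itself (which terms of the generalized Jacobi expansion die by $[ux^iy]=0$ for $i<j-h$ and by $[ux^i]=0$ for $i\ge j-h+2$, leaving exactly two surviving terms) is only asserted ``should assemble,'' not carried out. As it stands the proposal identifies the right lemma but would fail precisely at the exceptional cases that constitute most of the actual work.
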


Note that, unlike Theorem~\ref{thm:chain_recap},
Proposition~\ref{prop:chain_genuine}
does not require $y$ to centralizes $L_{m-q+2}$
(but only the following ones up to $L_{m-2}$).
This will provide a bit of leeway, later,
for reducing part of the proof of the case $\mu=0$ in Proposition~\ref{prop:chain_mu=0}
to that of the case $\mu=1$ within
Proposition~\ref{prop:chain_genuine}.

\begin{proof}
We start our proof allowing arbitrary values of $\mu$ with the exception
of $0$, and hence including $-1$.
The argument itself will show its limitations in case $\mu=-1$.

Let the element $v$ span $L_{m-1}$.
Our goal is proving, inductively,
\[
[vxyx^{h-1}y]=0\qquad\text{for $0<h\le q-3$.}
\]
In fact, by induction on $h$ this will imply that
$[vxyx^{h-1}]$ spans $L_{m+h}$,
and will yield the desired conclusion.
To this purpose we will exploit the full range of equations
\[
[yx^jy]=0\qquad\text{for $0\le j\le q-3$.}
\]

Our proof for the induction step will cover also the induction base $h=1$.
Thus, arguing inductively, suppose $1\le h<q-3$, and let $h<j\le q-3$.
Choose an element $u\in L_{m-1-j+h}$ such that $[ux^{j-h}]=v$, as we may,
and note $[uy]=0$.
By hypothesis we have $[ux^iy]=0$ for $0\leq i <j-h$.
Furthermore, because $[vxx]=0$ we have $[ux^{i}]=0$ for $i\geq j-h+2$.
Applying the generalized Jacobi identity, we find
\begin{align*}
0&=
[u[yx^jy]]=[u[yx^j]y]
\\&=
\sum_{i=0}^{j}(-1)^i \binom{j}{i}[ux^{i}yx^{j-i}y]
\\&=
(-1)^{j-h}\binom{j}{h}[vyx^hy]
+(-1)^{j-h+1}\binom{j}{h-1}[vxyx^{h-1}y]
\\&=
(-1)^{j-h}\left(\binom{j}{h}\mu^{-1}-\binom{j+1}{h}\right)[vxyx^{h-1}y],
\end{align*}
where we have used
$\mu([vxy]+[vyx])=[vxy]$
and
$\binom{j}{h-1}+\binom{j}{h}=\binom{j+1}{h}$.

Now the desired conclusion $[vxyx^{h-1}y]=0$ follows, inductively, for $h<q-3$,
provided that we can find $j$ in the range $h<j\le q-3$ such that the coefficient of
$[vxyx^{h-1}y]$ in the above equation is not a multiple of $p$.
That is, we succeed except if
\[
\binom{j+1}{h}\equiv \mu^{-1}\binom{j}{h}\pmod{p}
\qquad\text{for $h<j\le q-3$}.
\]
According to Lemma~\ref{lemma:invert}
this is equivalent to
\[
\binom{q-1-h}{q-2-j}+\mu^{-1}\binom{q-1-h}{q-1-j}\equiv 0\pmod{p}
\qquad\text{for $h<j\le q-3$}.
\]
Now Lemma~\ref{lemma:binomial} with $n=q-1-h$ shows that
either $q-1-h$ is a power of $p$, or $\mu=1$ and $q-h$ is a power of $p$,
or $\mu=-1$ and $h=q-4$.
Note that this covers our induction base $h=1$ because we have assumed $q>5$
in our definition of Nottingham algebras.
Furthermore, the above conclusion points to the obstacle for $\mu=-1$,
which is the reason for excluding that case from the statement of
Proposition \ref{prop:chain_genuine}.

To work around those exceptional values of $h$ in some cases we will need a different calculation,
which is valid for $0\le h\le q-3$:
\begin{align*}
0&=
[v[yx^{h}y]]=[v[yx^{h}]y]-[vy[yx^h]]
\\&=
[vyx^hy]-h[vxyx^{h-1}y]-(-1)^h[vyx^hy]
\\&=
\bigl((1-(-1)^h)(\mu^{-1}-1)-h\bigr)[vxyx^{h-1}y].
\end{align*}
This calculation also takes care of the final step $h=q-3$ of the induction, which was not covered by the previous calculation,
because the coefficient of $[vxyx^{q-4}y]$ equals $3$ and hence is nonzero modulo $p$.

If $q-1-h$ is a power of $p$, necessarily greater than $1$ because $h\le q-3$, then $h$ is odd and congruent to $-1$ modulo $p$.
Hence if $\mu\neq 2$ the above calculation allows us to conclude $[vxyx^{h-1}y]=0$ as desired.
If $\mu=2$ no direct calculation can show the vanishing of
$[vxyx^{h-1}y]=-2[vyx^{h}y]$,
but only that it is a central element, after which
its vanishing follows from the covering property.
In fact, one has $0=[vyx^{h-1}[xy^2]]=[vyx^hy^2]$ and $0=[vx[yx^hy]]=2[vxyx^hy]+[vxyx^{h-1}yx]$.
Therefore $[vyx^hyy]=0=[vyx^{h}yx]$, and so the element $[vyx^hy]$ is central, as claimed.

We finally deal with the case $\mu=1$,
where the above inductive proof that $[vxyx^{h-1}y]=0$
fails to work precisely when $q-h$ is a power of $p$.
We cover those instances of the induction step
with a different calculation.
Thus, suppose $h=q-p^t$, with $1<p^t<q$, and let
$w \in L_{m-p^t}$ such that $[wx^{p^t-1}]=v$.
Because $[v_{1}yx]=-2[v_{1}xy]$ we have
$[w[v_{1}yx]]=-2[w[v_{1}xy]]$,
and we now expand each side.
Up to a nonzero scalar the right-hand side equals
\[
[w[v_{1}xy]]=[w[v_{1}x]y]=[w[yx^{q-1}]y]=[vxyx^{h-1}y],
\]
because $[wy]=0$.
The left-hand side equals
\[
[w[v_{1}yx]]=[w[v_{1}y]x]-[wx[v_{1}y]]=[wv_{1}yx]-[wxv_{1}y]+[wxyv_{1}]=0.
\]
For this calculation we have used $[wxy]=0$,
then that the element $[wv_{1}]$ has degree $m+h-1$, whence it is centralized by $y$ by inductive hypotheses, and finally that
\[
[wxv_{1}]=[wx[yx^{q-2}]]=\binom{q-2}{p^t-1}[vxyx^{h-1}]=0,
\]
where the binomial coefficient is congruent to
$\binom{p^t-2}{p^t-1}$, modulo $p$, and hence
is a multiple of $p$.
We conclude $[vxyx^{h-1}y]=0$ as desired,
which completes the missing steps in our induction.
\end{proof}

We now deal with the remaining cases of Theorem~\ref{thm:chain_recap}, namely,
when $\mu$ equals $-1$ or $0$.

%\myframe{
%\fbox{move to intro}
%Nella proposizione precedente $q\geq 5$ (purch\`e la prima e la seconda catena siano regolari) e $p>3$. In caratteristica $3$ l'ostacolo sono le Nottingham deflations,
%vedi Thin loop algebras of Albert-Zassenhaus algebras, pag. 12.
%}

If $L_{m}$ has type $\mu=-1$, then the above proof
of Proposition~\ref{prop:chain_genuine} almost reaches the desired
conclusion, only failing to show that $L_{m+q-4}$
is centralized by $y$.
To extend the conclusion to the case $\mu=-1$,
in Theorem~\ref{thm:chain_recap} we have
added an assumption that $L_{m-q+1}$ is a diamond with a type.

The case where both $L_{m-q+1}$ and $L_{m}$ have type $-1$
occurs when $L$ the graded Lie algebra associated with the lower
central series of the Nottingham group, or a generalization of it
studied in~\cite{Car:Nottingham}.
There it was shown that $[L_{m+q-4},y]$ had to be central
in that case.
In our general setting,
if $L_{m-q+1}$ has finite type different from $-1$, then $[L_{m+q-4},y]=0$ follows from the first calculation in~\cite[Subsection~7.2.1]{CaMa:Nottingham}.
Here, we use an alternate argument to extend the conclusion to the case where the diamond $L_{m-q+1}$ has infinite type.

%\myframe{Nella dimostrazione della proposizione successiva si usa la relazione
%$[v_{1}xyx^{q-5}y]=0$. Aggiunta a inizio sezione l'ipotesi che $L_{3q-1}\neq 0$
%se $q>5$ e $L_{2q-4},y]=0$ se $q=5$.}

\begin{prop}\label{prop:chain_mu=-1}
Let $L$ be a Nottingham algebra with second diamond $L_q$ and standard generators $x$ and $y$.
Let $L_{m}$, with $m\geq 2q-1$, be a diamond of type
%$\mu \in \F\cup \{\infty\}$, for some $m\geq 2q-1$,
$\mu=-1$.
Suppose $y$ centralizes every homogeneous component from $L_{m-q+2}$ up to $L_{m-2}$.
Assume further that $L_{m-q+1}$ is a diamond with a type (possibly fake).
Then $y$ centralizes each homogeneous component from $L_{m+1}$ up to $L_{m+q-3}$.
\end{prop}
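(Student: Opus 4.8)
The plan is to fill the single gap left open by the argument of Proposition~\ref{prop:chain_genuine}. Running that proof verbatim with $\mu=-1$, where $v$ spans $L_{m-1}$, establishes $[vxyx^{h-1}y]=0$ for every $h$ with $0<h\le q-3$ with the sole exception of $h=q-4$; equivalently, $y$ centralizes every homogeneous component from $L_{m+1}$ up to $L_{m+q-3}$, only failing to settle $L_{m+q-4}$. Since the successful steps show that $s=[vxyx^{q-5}]$ spans $L_{m+q-4}$, the whole proposition reduces to the single identity $[vxyx^{q-5}y]=0$.

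First I would recast this as a centrality statement. Write $z=[vxyx^{q-5}y]\in L_{m+q-3}$. Because $(\ad y)^2=0$ we get $[zy]=0$ for free, so $z$ will be a central element as soon as we show $[zx]=0$; as $L$ has trivial centre this forces $z=0$, exactly in the spirit of the treatment of the exceptional value $\mu=2$ inside Proposition~\ref{prop:chain_genuine}. Thus everything reduces to proving $[vxyx^{q-5}yx]=0$ in $L_{m+q-2}$.

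To prove this I would bring in the diamond $L_{m-q+1}$, which is why its presence is assumed. Let $w$ span $L_{m-q}$. Using that $y$ centralizes $L_{m-q+2},\ldots,L_{m-2}$, the covering property lets me write $v$ as a nonzero multiple of $[wyx^{q-2}]$, while the type of $L_{m-q+1}$ supplies the relations $[wxx]=0$ together with a proportionality between $[wxy]$ and $[wyx]$. The idea is to feed this proportionality into a generalized Jacobi expansion of a suitable bracket of $w$ landing in degree $m+q-2$, so that all but a controlled handful of terms vanish because the intermediate components are centralized by $y$, leaving a short combination in which $[zx]$ is the only possibly nonzero summand. When $L_{m-q+1}$ has finite type $\lambda\neq-1$ this is the computation of~\cite[Subsection~7.2.1]{CaMa:Nottingham}; when $\lambda=-1$ it is the Nottingham-group situation of~\cite{Car:Nottingham}, where $[L_{m+q-4}y]$ is shown to be central; the genuinely new case is $\lambda=\infty$, where I would use the infinite-type relation $[wyx]=-[wxy]$ in place of the finite-type one and track the surviving coefficients modulo $p$ by Lucas' theorem.

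The hard part is precisely this infinite-type case. The value $\mu=-1$ is the symmetric point at which the clean binomial mechanism of Lemma~\ref{lemma:binomial} degenerates, which is exactly why the step $h=q-4$ cannot be closed by the generic calculation and why information on the previous diamond must be imported at all. I expect the main effort to be the bookkeeping that, after expanding the chosen Jacobi identity, exhibits the coefficient of $[zx]$ (or the pair of relations displaying $z$ as central) as nonzero modulo $p$; here the hypotheses $p>3$ and $q>5$ should be what rescue the argument, just as $p>3$ was needed to separate the $\mu=-1$ and $n=3$ alternatives in Lemma~\ref{lemma:binomial}.
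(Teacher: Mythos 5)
Your reduction is exactly the paper's: running Proposition~\ref{prop:chain_genuine} with $\mu=-1$ leaves only the identity $[vxyx^{q-5}y]=0$ to prove, and the hypothesis on $L_{m-q+1}$ must be imported to prove it. But from that point on the proposal stops short of a proof. The entire content of this proposition is the explicit computation that kills $[vxyx^{q-5}y]$, and you defer it to ``bookkeeping that \dots exhibits the coefficient \dots as nonzero modulo $p$'' without specifying which relation is expanded or why the surviving coefficient is nonzero. That is precisely the part that can fail (it \emph{does} fail for $\lambda=-1$, which is why that case has to be quarantined and referred to~\cite{Car:Nottingham}), so it cannot be waved through. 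The paper's actual argument takes $u\in L_{m-q}$ with $v=[uxyx^{q-3}]$, expands $0=[u[v_{1}xyx^{q-5}y]]$ using the identities $[u[v_{1}x]]=\lambda^{-1}[vx]$, $[ux[v_{1}x]]=0$, $[uy[v_{1}x]]=(1-\lambda^{-1})[vxy]$, $[uxy[v_{1}x]]=-[vxyx]$, and lands on $(3\lambda^{-1}+3)[vxyx^{q-5}y]=0$; since one may assume $\lambda\neq-1$ and $p>3$, the coefficient is a unit. A parallel computation with $[uyx^{q-2}]=v$ handles $\lambda=0$. None of this, nor any substitute for it, appears in your proposal.

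Two further points. First, your detour through centrality (show $[zx]=0$, conclude $z$ is central, hence $z=0$) is an extra, unverified layer: it pushes the problem into degree $m+q-2$, where you would still need an expansion whose only surviving term is $[zx]$ with a unit coefficient, and you never produce one. The paper shows this detour is unnecessary here --- the direct relation already has a nonzero coefficient $3(\lambda^{-1}+1)$ --- and reserves the centrality trick for the genuinely degenerate situation $\mu=2$ inside Proposition~\ref{prop:chain_genuine}. Second, your setup $v\propto[wyx^{q-2}]$ with $w$ spanning $L_{m-q}$ breaks down when $L_{m-q+1}$ is a fake diamond: for $\lambda=1$ one has $[wy]=0$, so that element vanishes, and for $\lambda=0$ one has $[wx]=0$, so the type relations you intend to feed into the Jacobi expansion take a different form. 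The statement explicitly allows fake diamonds, so these cases must be addressed, as the paper does by choosing $v=[uxyx^{q-3}]$ for $\lambda\neq 0$ and $v=[uyx^{q-2}]$ for $\lambda=0$.
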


\begin{proof}
Let $\lambda$ be the type of the diamond $L_{m-q+1}$.
Without loss of generality we may assume $\lambda\neq -1$, the complementary case being covered in~\cite{Car:Nottingham}.
As explained above, the only obstacle in the proof of Proposition~\ref{prop:chain_genuine} when $\mu=-1$ is where
we used the relation $[yx^{q-4}y]=0$.
Thus we only have to show that $[vxyx^{q-5}y]=0$.

Suppose first $\lambda \neq 0$.
We can choose an element  $u\in L_{m-q}$ such that
$v=[uxyx^{q-3}]$. We also have $[uxx]=0=[uyy]$ and $[uyx]=(\lambda^{-1}-1)[uxy]$.
We will use the identities
\begin{align*}
&[u[v_{1}x]]=\lambda^{-1}[vx], &&[ux[v_{1}x]]=0, \\
&[uy[v_{1}x]]=(1-\lambda^{-1})[vxy], &&[uxy[v_{1}x]]=-[vxyx].
\end{align*}
To justify the first one note that $[u[v_{1}x]]=[u[yx^{q-1}]]=[uyx^{q-1}]+[uxyx^{q-2}]=\lambda^{-1}[vx]$. Similarly one can
prove the other ones.
We now show that $[vxyx^{q-5}y]=0$. We use the relation $[v_{1}xyx^{q-5}y]=0$,  expanding
\[
0=[u[v_{1}xyx^{q-5}y]]=[u[v_{1}xyx^{q-5}]y]-[uy[v_{1}xyx^{q-5}]].
\]
The first  term at the right-hand side is
\begin{align*}
&[u[v_{1}xyx^{q-5}]y]=[u[v_{1}xy]x^{q-5}y]+5[ux[v_{1}xy]x^{q-6}y]\\
&=[u[v_{1}x]yx^{q-5}y]-[uy[v_{1}x]x^{q-5}y]+5[ux[v_{1}x]yx^{q-6}y]-5[uxy[v_{1}x]x^{q-6}y]\\
&=(2\lambda^{-1}+4)[vxyx^{q-5}y].
\end{align*}
%which can be suitably interpreted also when $q=5$.
The second term at the right-hand side is
\begin{align*}
&[uy[v_{1}xyx^{q-5}]]=[uyx^{q-5}[v_{1}x]y]=(\lambda^{-1}-1)[uxyx^{q-6}[v_{1}x]y]\\
&=-(\lambda^{-1}-1)[vxyx^{q-5}y].
\end{align*}
Hence we have
\[
0=[u[v_{1}xyx^{q-5}y]]=(3\lambda^{-1}+3)[vxyx^{q-5}y]
\]
as desired.

If $\lambda=0$ we choose $u \in L_{m-q}$ such that $[uyx^{q-2}]=v$ and $[ux]=0=[uyy]$. The same expansion as above yields
\[
0=3[vxyx^{q-5}],
\]
because
\[
[u[v_{1}xyx^{q-5}]y]=[u[v_{1}xy]x^{q-5}y]=2[vxyx^{q-5}y]
\]
and $[uy[v_{1}xyx^{q-5}]]=-[vxyx^{q-5}y]$.
\end{proof}

% Note that the above proof has not used the hypothesis that
% $y$ centralizes $L_{m-q+2}$, but only that it centralizes every homogeneous component from $L_{m-q+3}$ up to $L_{m-2}$.
% However, we have assumed the stronger (and more natural) hypothesis in the statement of Proposition~\ref{prop:chain_genuine}
% for consistency with all the exceptional cases below, some of which require it.
% In particular, this will provide a bit of leeway for reducing part of the proof of the case $\mu=0$ in Proposition~\ref{prop:chain_mu=0}
% to that of the case $\mu=1$ within
% Proposition~\ref{prop:chain_genuine}.

We now need to establish a result similar to Proposition~\ref{prop:chain_genuine} for the remaining case where $L_m$ has type $0$.
That can be interpreted as $L_{m-1}$ being a diamond of type $1$.
% Strictly speaking, because of the shift in degree
% the hypotheses of Proposition~\ref{prop:chain_genuine}
% are not fully satisfied then.
% %(Its application in this shifted setting would require knowing that $y$ centralizes $L_{m-q+1}$ as well.)
% However, note that our proof of
% Proposition~\ref{prop:chain_genuine}
% does not actually make any use of the
% hypothesis that $y$ centralizes $L_{m-q+2}$
% (but only the following ones up to $L_{m-2}$).
The conclusions of Proposition~\ref{prop:chain_genuine}
in this shifted setting, that $y$ centralizes each homogeneous
component from $L_{m}$ up to $L_{m+q-4}$,
reach one shorter than we would like.
In order to extend the conclusions to $y$
centralizing $L_{m+q-3}$ as well
we need to make an assumption on a diamond $L_{m-q+1}$.

\begin{prop}\label{prop:chain_mu=0}
Let $L$ be a Nottingham algebra with second diamond $L_q$ and standard generators $x$ and $y$.
Let $L_{m}$, with $m\geq 2q-1$, be a diamond of type
%$\mu \in \F\cup \{\infty\}$, for some $m\geq 2q-1$,
$\mu=0$.
Suppose $y$ centralizes every homogeneous component from $L_{m-q+2}$ up to $L_{m-2}$.
Assume further that $L_{m-q+1}$ is a
(possibly fake) diamond with a nonzero type.
Then $y$ centralizes each homogeneous component from $L_{m+1}$ up to $L_{m+q-3}$.
\end{prop}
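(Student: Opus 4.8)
\section*{Proof proposal}

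The plan is to split the argument into a main reduction, which recovers all but one of the required components, and a single final step that brings in the hypothesis on $L_{m-q+1}$. Since $L_m$ has type $0$, I would first reinterpret $L_{m-1}$ as a (fake) diamond of type $1$: if $v$ spans $L_{m-2}$, then the type-$0$ relations on $L_m$ amount precisely to $[vy]=0$ and $[vxx]=0$, so that $[vx]$ spans $L_{m-1}$ and $[vxy]$ spans $L_m$. With this reading I would apply Proposition~\ref{prop:chain_genuine}, which covers the type $\mu=1$ since $1\neq -1,0$, to the diamond $L_{m-1}$, with $m-1$ in place of $m$ (in the boundary case $m=2q-1$ one simply re-runs the computation of that proposition, whose effective hypotheses reach only down to $L_{q+1}$). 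After this shift the hypothesis requires only that $y$ centralize $L_{m-q+2},\ldots,L_{m-3}$, which is guaranteed by our assumption that $y$ centralizes $L_{m-q+2},\ldots,L_{m-2}$. It is exactly here that the extra leeway of Proposition~\ref{prop:chain_genuine}, namely that it does not require $y$ to centralize the component one step lower, becomes essential: after the shift that omitted component is $L_{m-q+1}$, which is the previous diamond and hence is not centralized by $y$. The conclusion of this first step is that $y$ centralizes $L_m,\ldots,L_{m+q-4}$, equivalently $[vxyx^{h-1}y]=0$ for $0<h\le q-3$.

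This falls one component short, so it remains only to treat $L_{m+q-3}$, that is, to prove the single relation $[vxyx^{q-3}y]=0$, corresponding to $h=q-2$. For this I would follow the template of Proposition~\ref{prop:chain_mu=-1}. Writing $\lambda\neq 0$ for the type of $L_{m-q+1}$ and choosing $u$ to span the component $L_{m-q}$ immediately preceding it, I would arrange $v=[uxyx^{q-4}]$ and record the type-$\lambda$ relations $[uxx]=0=[uyy]$ and $[uyx]=(\lambda^{-1}-1)[uxy]$, with the convention $\lambda^{-1}=0$ when $\lambda=\infty$. From these one extracts a short list of transport identities expressing $[u[v_1x]]$, $[uy[v_1x]]$ and $[uxy[v_1x]]$ in terms of $[vx]$, $[vxy]$ and $[vxyx]$. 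I would then form the bracket $[u[v_1xyx^{q-4}y]]$, which vanishes because $y$ centralizes every component from $L_{q+1}$ up to $L_{2q-3}$ and hence $[v_1xyx^{q-4}y]=0$, split it by the Jacobi identity as $[u[v_1xyx^{q-4}]y]-[uy[v_1xyx^{q-4}]]$, and reduce each part through the transport identities until the whole expression collapses to a scalar multiple of $[vxyx^{q-3}y]$.

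The main obstacle is showing that this scalar multiple is nonzero modulo $p$ for every admissible $\lambda$. As in Proposition~\ref{prop:chain_mu=-1}, where the analogous coefficient was $3(\lambda^{-1}+1)$ and ruled out only $\lambda=-1$, I expect the coefficient here to be a simple expression in $\lambda^{-1}$ that vanishes for at most one exceptional value; that value, together with the fake case $\lambda=1$ should it degenerate, would then demand a separate computation. In such a degenerate case a direct expansion may only prove $[vxyx^{q-3}y]$ to be a central element, after which its vanishing follows from the triviality of the centre of $L$, exactly the device already used for $\mu=2$ inside Proposition~\ref{prop:chain_genuine}. The remaining work, keeping the degrees aligned so that the transported relation lands in $L_{m+q-2}$ and each auxiliary identity is applied in the correct homogeneous degree, is routine but error-prone bookkeeping.
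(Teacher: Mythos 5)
Your proposal follows the paper's proof essentially verbatim: the paper likewise reinterprets $L_{m-1}$ as a (fake) diamond of type $1$, applies Proposition~\ref{prop:chain_genuine} in the shifted setting (exploiting exactly the leeway about the omitted component $L_{m-q+1}$ that you identify) to centralize everything up to $L_{m+q-4}$, and then expands $0=[u[v_1xyx^{q-4}y]]$ with $u\in L_{m-q}$ and the type-$\lambda$ transport identities to reach the last component. The one step you leave open resolves cleanly: the two halves of the Jacobi expansion contribute coefficients $-(\lambda^{-1}+3)$ and $-(\lambda^{-1}-1)$ to the target bracket, so the $\lambda^{-1}$ cancels in their difference and one is left with $4$ times the target bracket equal to zero, which suffices since $p>3$; no exceptional value of $\lambda$ arises and no separate centrality argument is needed.
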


\begin{proof}
As explained above, the proof of Proposition~\ref{prop:chain_genuine} yields that
$y$ centralizes each homogeneous component from $L_{m+1}$ up to $L_{m+q-4}$,
hence we only need to prove $[vyx^{q-3}y]=0$.
Choose $u\in L_{m-q}$ such that $v=[uxyx^{q-3}]$.
Since $L_{m-q+1}$ is a diamond of type $\lambda\in \F^{\ast}\cup\{\infty\}$ we also have
$[uxx]=0=[uyy]$ and $[uyx]=(\lambda^{-1}-1)[uxy]$.
We expand
\[
0=[u[v_{1}xyx^{q-4}y]]=[u[v_{1}xyx^{q-4}]y]-[uy[v_{1}xyx^{q-4}]].
\]
The first term of the difference is
\begin{align*}
&[u[v_{1}xyx^{q-4}]y]=[u[v_{1}xy]x^{q-4}y]+4[ux[v_{1}xy]x^{q-5}y]\\
&=[u[v_{1}x]yx^{q-4}y]-[uy[v_{1}x]x^{q-4}y]+4[ux[v_{1}x]yx^{q-5}y]-4[uxy[v_{1}x]x^{q-5}y]\\
&=-(\lambda^{-1}+3)[vyx^{q-3}y].
\end{align*}
The second term of the difference is
\begin{align*}
&[uy[v_{1}xyx^{q-4}]]=\sum_{i=0}^{q-4}(-1)^i\binom{q-4}{i}[uyx^i[v_{1}xy]x^{q-4-i}]\\
&=\sum_{i=0}^{q-4}(-1)^i\binom{q-4}{i}[uyx^i[v_{1}x]yx^{q-4-i}]\\
&=-[uyx^{q-4}[v_{1}x]y]=-(\lambda^{-1}-1)[uxyx^{q-5}[v_{1}x]y]\\
&=-(\lambda^{-1}-1)[vyx^{q-3}y].
\end{align*}
Altogether, we find $4[vyx^{q-3}y]=0$, as desired.
\end{proof}

The results of this section collectively prove
Theorem~\ref{thm:chain_recap},
thus completing also the proofs of
Theorem~\ref{thm:distance} and Theorem~\ref{thm:distance_intro},
which relied on that result.

\bibliography{References}

\end{document}